\newtheorem{theorem}{Theorem}[section]
\theoremstyle{plain}
\newtheorem{lemma}[theorem]{Lemma}
\newtheorem{proposition}[theorem]{Proposition}
\newtheorem{corollary}[theorem]{Corollary}
\newtheorem{problem}[theorem]{Problem}
\newtheorem{property}[theorem]{Property}
\theoremstyle{definition}
\definecolor{blau}{rgb}{0.1,0.0,0.9}
\definecolor{gruen}{cmyk}{1.0,0.2,0.7,0.07}
\definecolor{mag}{cmyk}{0.0,0.9,0.3,0.0}
\begin{document}

\title{Restricted extension of sparse partial edge colorings of hypercubes}

\author{Carl Johan Casselgren, Klas Markstr\"om and Lan Anh Pham }
\maketitle

\begin{abstract}
		We consider the following type of question: 
		Given a partial proper $d$-edge coloring 
		of the $d$-dimensional hypercube $Q_d$, and lists of allowed colors
		for the non-colored edges of $Q_d$,
	can we extend the partial coloring to a proper 
	$d$-edge coloring using only 
	colors from the lists?     
	We prove that this question has a positive answer in the case when 
	both the partial coloring and the color lists
	satisfy certain sparsity conditions.
\end{abstract}

%------------------------------------------------------------------------------------------------------------------------------------------------------------------------------------------------------------------------

\section{Introduction}
The chromatic index $\chi'(G)$ of a (simple) graph $G$ is far simpler 
in terms of its possible values than the chromatic number;
Vizing's theorem \cite{vizing1964estimate} tells us that in order to properly color 
the edges of $G$ we need either 
 $\Delta(G)$ or $\Delta(G)+1$ colors, 
where $\Delta(G)$ denotes the maximum degree of $G$, and
%and it is known that it is an NP-complete problem to determine 
%which of the two values is optimal {\blue \cite{holyer1981np}; 
by K\"onig's edge coloring theorem, $\chi'(G) = \Delta(G)$ if $G$ 
is bipartite \cite{konig1916graphen}.
This simplicity quickly disappears in many of the  natural variations on the basic 
edge coloring problem, e.g. the precoloring extension problem, 
where some of the edges of a graph have been (properly)
colored and we want to determine if this
partial coloring can be extended to a proper
edge coloring of the full graph using no extra colors;
indeed this problem
is NP-complete already for $3$-regular bipartite graphs \cite{fiala2003np}.
%and the list-coloring problem, where each edge has been assigned a list of
%allowed colors and we want to know if there is a proper coloring using only %the allowed colors.

One of the earlier references explicitly discussing the problem of 
extending a partial edge coloring is \cite{MS90}; there a 
simple necessary condition for the existence of an extension is 
given and the authors  find a class  of graphs where this 
condition is also sufficient.   More recently the question of 
extending a precoloring where the precolored edges form 
a matching has gathered interest; in \cite{EGHKPS} a 
number of positive results and conjectures are given. 
In particular, it is conjectured that for every graph $G$,
if $\varphi$ is an edge precoloring of a matching $M$ in $G$
using $\Delta(G)+1$ colors,
and any two edges in $M$ %the precolored
%matching 
are at distance at least $2$ from each other,  then $\varphi$ 
can be extended to a proper $(\Delta(G)+1)$-edge coloring of $G$;
%as long as {\blue $\Delta(G)+1$} colors are allowed; 
this was first 
conjectured in \cite{AM2001}, but then with 
distance $3$ instead. By the {\em distance} between two
edges $e$ and $e'$ here we mean the number of edges in a 
shortest path between an endpoint of $e$ and an endpoint of $e'$; 
a {\em distance-$t$ matching} is a matching where any
two edges are at distance at least $t$ from each other.
The {\em $t$-neighborhood} of an edge $e$ is
the graph induced by all edges of distance at most $t$ from $e$.

Note that the conjecture in \cite{EGHKPS} on distance-$2$ matchings
is sharp both with respect to the
distance between precolored edges,
and in the sense that $\Delta(G)+1$ can in general 
not be replaced by $\Delta(G)$,
even if any two precolored edges are at arbitrarily 
large distance from each other \cite{EGHKPS}.
In \cite{EGHKPS}, it is proved that this conjecture hold
for e.g. bipartite multigraphs and subcubic multigraphs, and
in \cite{GK} it is proved that a version of the conjecture with the distance increased to 9 holds for general graphs.

However, for one specific family of graphs,
the balanced complete
bipartite graphs $K_{n,n}$, the edge precoloring
extension problem was studied far earlier than  in the
above-mentioned references.  Here
the  extension problem corresponds to asking whether a
partial latin square can be completed to a latin square.   
In this form the problem appeared already in 1960, when Evans 
\cite{Ev60}  stated his now classic  conjecture that for
every positive integer $n$, if %only 
$n-1$ edges in $K_{n,n}$ have been (properly) colored, 
then this partial coloring can be extended to 
a proper $n$-edge-coloring of $K_{n,n}$.
This conjecture was
solved for large %enough 
$n$ by H\"aggkvist  \cite{H78} and later for all $n$ by Smetaniuk 
\cite{S81}, and independently by 
Andersen and Hilton \cite{andersen1983thank}.
Generalizing this problem, Daykin and H\"aggkvist \cite{DH}
proved several results on
extending partial edge colorings of $K_{n,n}$, and they also conjectured that  much denser partial colorings  can be extended, as long as the 
colored edges are spread out in a specific sense:
%In particular, say that 
a partial $n$-edge coloring of $K_{n,n}$ is 
{\em $\epsilon$-dense} if there are at most $\epsilon n$ colored edges 
from $\{1,\dots,n\}$
at any vertex and each color in $\{1,\dots,n\}$
is used at most $\epsilon n$ times
in the partial coloring. %whole graph. 
Daykin and H\"aggkvist \cite{DH} conjectured
that for every positive integer $n$,
every $\frac{1}{4}$-dense partial proper $n$-edge
coloring can be extended
to a proper $n$-edge coloring of $K_{n,n}$,
and proved a version of the conjecture for $\epsilon=o(1)$ 
(as $n \to \infty$) and $n$ divisible by 16.
Bartlett \cite{B2013} proved  that this conjecture holds for 
a fixed positive $\epsilon$, and recently a 
different proof which improves the value of $\epsilon$ was 
given in \cite{BKLOT}.

For general edge colorings of  balanced complete
bipartite graphs, Dinitz conjectured,
%({\blue see e.g. \cite{chetwynd1989note}}),
and Galvin proved \cite{Ga95}, that if each edge of $K_{n,n}$
is given a list of $n$ colors, then there is a proper edge 
coloring of $K_{n,n}$ with support in the lists. 
Indeed, Galvin's result was a complete solution
of the well-known List Coloring Conjecture
for the case of bipartite multigraphs
(see e.g. \cite{haggkvist1992some} for more background on this conjecture and its relation
to the Dinitz' conjecture).

Motivated by the Dinitz' problem,  H\"aggkvist \cite{Ha89} 
introduced the notion of {\em $\beta n$-arrays},
which correspond to list assignments $L$ of forbidden colors for 
$E(K_{n,n})$,
such that each edge $e$ of $K_{n,n}$ is assigned a list $L(e)$
of at most $\beta n$ forbidden colors from $\{1,\dots,n\}$, 
and at every vertex $v$ each color is forbidden on 
at most $\beta n$ edges adjacent to $v$;
we call 
such a list assignment for $K_{n,n}$ {\em $\beta$-sparse}. 
%meeting at any one vertex.  
If $L$ is a list assignment for $E(K_{n,n})$, then
a proper $n$-edge coloring $\varphi$ of $K_{n,n}$ {\em avoids}
the list assignment $L$ if $\varphi(e) \notin L(e)$ for 
every edge $e$ of $K_{n,n}$;
if such a coloring exists, then $L$ is {\em avoidable}.
H\"aggkvist conjectured that there exists a fixed 
$\beta>0$, in fact also that $\beta=\frac{1}{3}$,  such that
for every positive integer $n$,
every $\beta$-sparse list assignment for $K_{n,n}$ is avoidable.
%any $\beta n$-array is avoidable.   
That such a $\beta>0$ exists was proved
for %large 
even $n$
by Andr\'en  in her PhD thesis \cite{andren2010latin},
and later for all %large enough
$n$ in \cite{ACO}.

Combining the notions of extending a sparse precoloring and 
avoiding a sparse list assignment,
Andr\'en et al. \cite{ACM} proved that %for suitable 
there are constants $\alpha> 0$ and $\beta> 0$, %one can in fact find 
such that for every positive integer $n$,
every $\alpha$-dense partial edge
coloring of $K_{n,n}$ can be extended %in such a way that
to a proper $n$-edge-coloring avoiding any given $\beta$-sparse
list assignment $L$, provided that no edge $e$ is precolored by a color
that appears in $L(e)$.
In contrast to this, it was proved in \cite{EGHKPS} that there are
bipartite graphs $G$
with a precolored matching of size $2$, which is not extendable
to a proper $\Delta(G)$-edge coloring.
These examples have edge densities converging to some constant 
$0<c\leq\frac{1}{2}$, and many of the 
proof methods used in the papers mentioned
%here 
above rely on the high edge density of the complete bipartite graph. 
% but 
It is thus natural to ask if the good behaviour seen for $K_{n,n}$ will hold for 
well-structured graphs of 
lower densities.

The aim of this paper is to show that some generalizations of this type are possible.   
We will demonstrate that %after some modifications the 
results similar to those from  
\cite{ACM}  hold for the family of
$d$-dimensional hypercubes $Q_d$; 
these graphs have a degree which is %only 
logarithmic in the number of vertices, rather than linear, 
so we are now looking at a
family of graphs with vanishing asymptotic density.   %Our main theorem is the following.
Instead of bounding the global density of precolored edges,
as in the case of complete bipartite graphs, for
hypercubes
we shall bound the number
of precolored edges appearing in neighborhoods of given size.
Our results are stated in terms of $27$-neighborhoods; this size
of neighborhoods is solely due to proof technical reasons.

To state our main theorem, we need some terminology:
a {\em dimensional matching} $M$ of $Q_d$ is a perfect matching of $Q_d$
such that $Q_d- M$ is isomorphic to two copies of $Q_{d-1}$; evidently
there are precisely $d$ dimensional matchings in $Q_d$.
An edge precoloring of $Q_d$ with colors $1,\dots,d$
is called {\em $\alpha$-dense} if

\begin{itemize}
	
	\item[(i)] there are at most $\alpha d$ precolored edges at each vertex;
	
	\item[(ii)] for every $27$-neighborhood $W$ of an edge $e$ of
	$Q_d$, there are
	at most $\alpha d$ precolored edges with color $i$
	in $W$, $i =1,\dots, d$;
	
	\item[(iii)] for every $27$-neighborhood $W$, and every dimensional
	matching $M$, at most $\alpha d$ edges of $M$ are precolored in $W$.

\end{itemize}
	Here, and in the following, all $t$-neighborhoods are taken with
	respect to edges.
	A list assignment $L$ for $E(Q_d)$ is {\em $\beta$-sparse}
	if the list of each edge is a (possibly empty) subset of $\{1,\dots,d\}$, 
	and
	\begin{itemize}
	
	\item[(i)] $|L(e)| \leq \beta d$ for each edge $e \in E(Q_d)$;
	
	\item[(ii)] for every vertex $v \in V(Q_d)$,
	each color in $\{1,\dots,d\}$ occurs in at most $\beta d$
	lists of edges incident to $v$;
	
	\item[(iii)] for every $27$-neighborhood $W$, and every 
	dimensional matching $M$, any color appears at most $\beta d$
	times in lists of edges of $M$ contained in $W$.
\end{itemize}

Our main result is the following.

\begin{theorem}
\label{mainth}
	There are constants $\alpha> 0$  and $\beta> 0$ such that  
	for every positive integer $d$, if $\varphi$  
	is an $\alpha$-dense $d$-edge precoloring of $Q_d$, $L$ a
	$\beta$-sparse list assignment for $Q_d$,
	and $\varphi(e) \notin L(e)$ for every edge $e \in E(Q_d)$,
	then there is a proper
	$d$-edge coloring of $Q_d$ which agrees with $\varphi$
	on any precolored edge and which avoids $L$.
\end{theorem}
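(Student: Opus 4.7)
The plan is to mirror the strategy of Andr\'en, Casselgren, and Markstr\"om \cite{ACM} for $K_{n,n}$, leveraging two features of $Q_d$: its recursive product structure $Q_d = Q_{d-1}\square K_2$, and the existence of a canonical proper $d$-edge coloring $\psi_0$ in which each dimensional matching $M_i$ is monochromatic in color $i$. The idea is to start from a randomly relabeled copy of $\psi_0$ and then perform local repairs in the relatively few places where this initial coloring conflicts with $\varphi$ or hits the lists $L$.

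First I would set $\psi = \sigma \circ \psi_0$, where $\sigma$ is a uniformly random permutation of $\{1,\dots,d\}$; note that $\psi$ is already a proper $d$-edge coloring of $Q_d$. Call an edge $e$ \emph{defective} if either $e$ is precolored and $\psi(e)\ne \varphi(e)$, or $e$ is unprecolored and $\psi(e)\in L(e)$. Using the $\alpha$-density of $\varphi$ and the $\beta$-sparsity of $L$ together with standard concentration, one should show that with positive probability the set $F$ of defective edges occupies only a tiny fraction of any $27$-neighborhood. The dimensional-matching parts of the sparsity hypotheses (item (iii) in both definitions) are exactly what prevents a random choice of $\sigma$ from concentrating defects along a single $M_i$.

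Next I would perform local repairs. In $Q_d$ the natural local exchange is a swap along a $4$-cycle spanned by two coordinate directions; such a cycle exists through every edge, stays within distance $2$ of it, and preserves properness when the two participating colors are swapped. Each defective edge would be fixed by choosing a $4$-cycle (or at worst a bounded-length alternating structure) whose other edges are neither defective nor precolored in either of the two swap colors, and whose swapped colors lie outside the relevant lists. To make all the chosen repairs compatible, I would define a bad event per defective edge depending only on the edges in its $27$-neighborhood and apply the Lov\'asz Local Lemma.

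The main obstacle is this last step: ensuring that the repairs can be made simultaneously disjoint and consistent. Sparsity within $27$-neighborhoods is precisely what makes the LLL dependency counts affordable, since a single $4$-cycle swap may propagate to a small but nontrivial cluster of further defects if the swap colors happen to coincide with other precolorings or forbidden lists nearby; the radius $27$ is chosen to comfortably bound this ``radius of influence'' of a cascade of local corrections, absorbing second-order and third-order effects of the repair procedure. Calibrating $\alpha$ and $\beta$ so that all LLL probability/degree inequalities close simultaneously, and verifying that $\sigma$ together with the local-repair step lands in the intersection of all good events, is the technical heart of the argument.
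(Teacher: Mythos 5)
Your overall architecture—start from the standard coloring of $Q_d$, apply a random permutation to the colors, then repair the small set of defects by swapping on $2$-colored $4$-cycles—does match the paper's skeleton. But two of your key moves differ from the paper's, and one of them leaves a genuine gap.

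First, a difference that isn't a gap: for the random relabelling step you invoke the Lov\'asz Local Lemma, whereas the paper simply counts. Lemma~\ref{alpha} bounds, by a direct union bound over the $d!$ permutations and the $2^d$ vertices (and edges), the number of permutations that fail any one of five local sparsity conditions, and shows this is $<d!$. This is both simpler than LLL and sidesteps the complication that LLL over a single random permutation (rather than independent coordinates) requires the lopsided version. Your route could work, but it is strictly heavier machinery for the same end.

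Second, the gap: your repair phase is under-specified in precisely the place that is hardest, and where the paper does most of its work. You want to assign to each defective edge a single $4$-cycle (or ``bounded-length alternating structure'') and then use LLL to make the choices simultaneously compatible. But the repairs must be \emph{edge-disjoint}, each repair must leave the coloring proper, and—crucially—a single swap is not enough: a precolored edge $e$ with $h'(e)\ne\varphi(e)$ may be adjacent to edges that already carry the color $\varphi(e)$, so a one-cycle swap at $e$ would create a new improperly colored or forbidden edge. The paper handles this by inserting two whole intermediate steps you omit: Step~II converts every conflict edge into a prescribed edge (so that a single clean class of ``needs fixing'' edges remains), and Step~III performs a preliminary round of disjoint swaps (Lemma~\ref{request}) to clear out ``clash'' edges and over-requested edges \emph{before} the main repair, so that the main repair can proceed. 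The main repair (Lemma~\ref{complete}) then uses, for each prescribed edge $e$, a $14$-vertex configuration $T_e$ built out of several interlocking $4$-cycles, chosen greedily so that the $T_e$'s are pairwise disjoint; disjointness is secured not by LLL but by counting forbidden choices at each step and showing that at least one valid $T_e$ always remains. Declaring ``apply LLL'' here does not close this gap: LLL would produce a random choice of repair per defect with bounded dependency, but you have not specified a probability space over repairs, shown that a randomly chosen repair is valid with a probability you can control, or explained why the chosen repairs end up pairwise disjoint. The cascading interactions you correctly identify are exactly what Lemmas~\ref{gamma}--\ref{complete} are tailored to absorb, and the radius-$27$ hypotheses are calibrated to those lemmas' bookkeeping, not to an LLL dependency graph.

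In short: your Step~I idea is a heavier-than-needed variant of the paper's Lemma~\ref{alpha}, and your repair step is a placeholder where the paper has three carefully staged lemmas. To turn this into a proof you would need either to supply the analogues of Lemmas~\ref{gamma}--\ref{complete}, or to genuinely carry out the LLL bookkeeping (probability space over repairs, dependency graph, and a disjointness mechanism), which is substantial and not obviously easier than the paper's greedy route.
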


%Here the notions of being $\alpha$-dense and $\beta$-uniform differs 
%from those used for $K_{n,n}$, in that we now also need 
%to add a condition on how colors appear on the 
%$d$ perfect matchings  corresponding the product 
%structure of $Q_d$, as will be explained in the 
%next section where we also give explicit values for $\alpha$ and $\beta$.

As a corollary of our main theorem we %can also 
%quickly see 
note that a version of the conjecture on
precolored distance-$2$ matchings from \cite{EGHKPS},
with $\Delta(G)$ in place of $\Delta(G)+1$,
but with a weaker distance requirement,
 holds for the family of hypercubes.
\begin{corollary}
	There is %constants $c>0$ and
	a constant $\beta>0$ such that
	if $L$ is a $\beta$-sparse list assignment $L$ for $Q_d$
	and $\varphi$ is a $d$-edge precoloring of a distance-$t$ matching
	in $Q_d$, where $t > 55$, then $\varphi$ can be extended
	to a proper $d$-edge-coloring of $Q_d$ which avoids $L$.
\end{corollary}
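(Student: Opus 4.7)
The plan is to reduce the Corollary to Theorem~\ref{mainth} by observing that the hypothesis $t>55$ forces any precoloring of a distance-$t$ matching to automatically satisfy the $\alpha$-density conditions. Let $\alpha,\beta_0$ denote the constants furnished by Theorem~\ref{mainth}, and set $\beta=\min(\beta_0,\,1/\lceil 1/\alpha\rceil)$.

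The key geometric observation I would establish first is the following: if two edges $e_1,e_2$ both lie in the $27$-neighborhood of some edge $f$, then $\mathrm{dist}(e_1,e_2)\leq 55$. To see this, take a shortest path $P_i$ of at most $27$ edges from an endpoint of $e_i$ to an endpoint of $f$, for $i=1,2$, and concatenate $P_1$, possibly the edge $f$ itself, and $P_2$ reversed, to obtain a walk of length at most $27+1+27=55$ between an endpoint of $e_1$ and one of $e_2$. Since the matching $M$ is distance-$t$ with $t>55$, any two edges of $M$ are at edge-distance strictly greater than $55$, so no $27$-neighborhood of $Q_d$ contains more than one edge of $M$.

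Assuming $d\geq\lceil 1/\alpha\rceil$, so that $\alpha d\geq 1$, I would next verify that $\varphi$ is $\alpha$-dense. Condition~(i) is immediate because $M$ is a matching, so each vertex is incident with at most $1\leq \alpha d$ precolored edges. Conditions~(ii) and~(iii) follow at once from the neighborhood observation above, since any $27$-neighborhood contains at most one precolored edge, and $1\leq \alpha d$. The hypothesis $\varphi(e)\notin L(e)$ is given, so Theorem~\ref{mainth} applies directly and produces the desired proper $d$-edge coloring of $Q_d$ extending $\varphi$ and avoiding $L$.

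It remains to handle the bounded range $d<\lceil 1/\alpha\rceil$. Here the choice of $\beta$ guarantees $\beta d<1$, so every list $L(e)$ is empty and the problem reduces to extending $\varphi$ to a proper $d$-edge coloring. Whenever $d\leq 55$, the vertex diameter of $Q_d$ is $d\leq 55$, so the distance-$t$ hypothesis with $t>55$ forces $|M|\leq 1$, and $\varphi$ is extended by permuting the colors of the canonical edge coloring induced by the $d$ dimensional matchings so that the one precolored edge receives the prescribed color. The finitely many remaining values of $d$ can be dealt with by an analogous argument, using the large pairwise distance between edges of $M$ to permute colors consistently along each dimensional class. I expect the distance estimate yielding the constant $55$ to be the only genuinely new technical point; once it is in place, the reduction to Theorem~\ref{mainth} is essentially automatic.
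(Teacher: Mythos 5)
Your central observation---that two edges lying in a common $27$-neighborhood are at edge-distance at most $27+1+27=55$, so a distance-$t$ matching with $t>55$ meets each $27$-neighborhood in at most one edge---is exactly the fact the paper alludes to, and your verification that conditions (i)--(iii) of $\alpha$-denseness then hold (once $\alpha d\geq 1$) is correct. Your care in noticing that Theorem~\ref{mainth}'s density hypothesis is vacuously violated when $\alpha d<1$, and cannot simply be waved away, is a point the paper itself passes over silently.

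However, your treatment of the range $56\leq d<\lceil 1/\alpha\rceil$ has a gap. You propose to settle it by ``permuting colors consistently along each dimensional class,'' but a permutation of the colors of the standard coloring fails as soon as the matching $M$ contains two edges $e_1,e_2$ belonging to the \emph{same} dimensional matching with $\varphi(e_1)\neq\varphi(e_2)$: every color permutation gives the entire dimensional class a single color, so it cannot agree with $\varphi$ on both. Such pairs certainly exist for $d$ in this range since edges of one dimensional matching can be far apart, and with the paper's $\alpha=10^{-622}$ this is not a short finite list but a range up to $d\approx 10^{622}$, so a case-by-case check is out. The clean repair is to note that when $\beta d<1$ all lists are empty, a distance-$t$ matching with $t>55$ is in particular a distance-$3$ matching, and Theorem~\ref{secondth} (which is proved for all $d$ directly from Property~\ref{prop:stand}, with no lower bound on $d$) then gives the required extension.
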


This follows from the fact that a precolored
distance-$t$ matching
is an $\alpha$-dense precoloring if $t > 55$.
%will give an $\alpha$-dense precoloring for some $\alpha(t)$.  
A precolored matching has a much more
restricted structure than a general $\alpha$-dense $d$-edge precoloring, 
so this corollary is most likely far from optimal in terms of 
the lower bound on $t$;
%how $t$ depends on $x$, and 
it would be interesting to see 
how far this can be improved. 

If we place both the precolored edges and those with a list of forbidden colors on them  on a matching, then our proof method in fact
trivially yields the following.

\begin{theorem}
\label{secondth}
	Let $\varphi$
	be a $d$-edge precoloring and $L$ a list assignment for
	the edges of $Q_d$. If every edge $e$ which is either
	precolored or satisfies $L(e) \neq \emptyset$ belongs
	to a distance-3 matching $M$ 
	in $Q_d$, then there is a $d$-edge coloring which agrees with $\varphi$
	on any precolored edge, and which avoids $L$.
	%such that each edge in $M$ is either precolored or has a list of fobidden
	%colors on it, we can find proper edge-coloring of $Q_d$ which 
	%extends this precoloring and is compatible with the lists.
\end{theorem}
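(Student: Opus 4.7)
The plan is to start from the canonical proper $d$-edge coloring $\varphi_0$ of $Q_d$, in which every edge of the $i$-th dimensional matching $M_i$ receives color $i$, and then modify $\varphi_0$ locally around each constrained edge $e\in M$ (one that is either precolored or satisfies $L(e)\neq\emptyset$) by a single Kempe swap on a $4$-cycle of $Q_d$. For each such $e$ I pick a target color $c(e)\in\{1,\dots,d\}$ as follows: if $\varphi$ precolors $e$, put $c(e)=\varphi(e)$; otherwise set $c(e)=\dim(e)$ whenever $\dim(e)\notin L(e)$, and else choose any $c(e)\in\{1,\dots,d\}\setminus L(e)$ (which exists under the implicit assumption $L(e)\subsetneq\{1,\dots,d\}$). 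If $c(e)=\dim(e)$ no swap is required; otherwise let $C_e$ be the unique $4$-cycle of $Q_d$ through $e$ whose two coordinate directions are $\dim(e)$ and $c(e)$. Under $\varphi_0$ the cycle $C_e$ alternates between the colors $\dim(e)$ and $c(e)$, so interchanging these two color classes on $C_e$ recolors $e$ from $\dim(e)$ to $c(e)$.

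The central observation is that, for any two distinct constrained edges $e,e'\in M$, the $4$-cycles $C_e$ and $C_{e'}$ (when both are defined) are vertex-disjoint; moreover, a constrained edge $e$ with $c(e)=\dim(e)$ cannot lie on $C_{e'}$ for any other $e'\in M$. To see this, notice that each of the four vertices of any such $4$-cycle sits at graph-distance at most $1$ from an endpoint of the associated edge (two of the cycle's vertices are the endpoints themselves, and the other two are their neighbors along the second coordinate of the cycle). Consequently a common vertex $w\in V(C_e)\cap V(C_{e'})$ would yield endpoints $v\in V(e)$ and $v'\in V(e')$ with $d_{Q_d}(v,w)\leq 1$ and $d_{Q_d}(w,v')\leq 1$, hence $d(e,e')\leq 2$, contradicting the hypothesis that $M$ is a distance-$3$ matching.

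Given this vertex-disjointness, all the Kempe swaps can be carried out simultaneously: each vertex of $Q_d$ is touched by at most one swap, which just permutes the colors of the two cycle-edges incident to that vertex, so properness of the coloring is preserved. The resulting proper $d$-edge coloring $\varphi$ satisfies $\varphi(e)=c(e)$ for every constrained $e$, which agrees with the precoloring and avoids $L(e)$; non-constrained edges carry empty lists by hypothesis and so require no further verification. The main (and really only) obstacle is the vertex-disjointness step, and the distance bound of $3$ is essentially tight here: for the two parallel edges $000\text{-}100$ and $011\text{-}111$ of $Q_3$ one has $d(e,e')=2$, and their natural recoloring $4$-cycles share an edge, so a weaker distance assumption would not suffice for the swap argument.
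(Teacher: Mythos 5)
Your proposal is correct and follows essentially the same route as the paper's own (sketched) proof: start from the standard dimensional coloring, assign each constrained edge a target color outside its list, and recolor it via a swap on a $2$-colored $4$-cycle, using the distance-$3$ hypothesis to guarantee that the chosen $4$-cycles are pairwise disjoint and contain no other constrained edge. Your version is in fact slightly more explicit than the paper's, in that you name the unique $4$-cycle spanned by $\dim(e)$ and $c(e)$ and verify the disjointness computation directly.
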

This proves that a slightly stronger version, with $d$ rather than $d+1$ colors,  of the  earlier mentioned conjecture from \cite{AM2001}  holds for the family of hypercubes.

The rest of the paper is organized as follows.
In Section 2 we introduce some terminology and notation
and also outline the proof of Theorem \ref{mainth}.
Section 3 contains the proof of a slightly reformulated version
of Theorem \ref{mainth}; we also indicate how Theorem \ref{secondth}
can be deduced from the proof of Theorem \ref{mainth}.
In Section 4 we give some concluding remarks; in
particular, we give an example
indicating what numerical values of $\alpha$ and $\beta$
in Theorem \ref{mainth} might be best
possible.
At the beginning of Section 3 we shall present
numerical values of $\alpha$ and $\beta$ for which our main
theorem holds, provided that $n$ is large enough.
Finally, throughout the paper, the base of the natural
logarithm is denoted by $e$.

%------------------------------------------------------------------------------------------------------------------------------------------------------------------------------------------------------------------------
\section{Terminology, notation and proof outline}

	Given an edge precoloring $\varphi$ (or just precoloring, or 
	partial edge coloring)
	of a graph $G$ with $\Delta(G)$ colors, 
	%i.e. a proper
	%edge coloring of $G[E']$ where $E' \subseteq E(G)$, 
	an {\em extension}
	of $\varphi$ is a proper $\Delta(G)$-edge coloring of $G$ which 
	agrees with $\varphi$
	on every precolored edge; if such a coloring of $G$ exists, then
	$\varphi$ is {\em extendable}.

%	We shall also need some standard definitions
%	on list edge coloring.
%	Given a graph $G$, assign to each edge $e$ of $G$ a set
%	$\mathcal{L}(e)$ of colors (positive integers).
%	Such an assignment $\mathcal{L}$ is called
%	a \emph{list assignment} for $G$ and
%	the sets $\mathcal{L}(e)$ are referred
%	to as \emph{lists} or \emph{color lists}.
%	If all lists have equal size $k$, then $\mathcal{L}$
%	is called a \emph{$k$-list assignment}.
%	Usually, we seek a proper
%	edge coloring $\varphi$ of $G$,
%	such that $\varphi(e) \in \mathcal{L}(e)$ for all
%	$e \in E(G)$. If such a coloring $\varphi$ exists then
%	$G$ is \emph{$\mathcal{L}$-colorable} and $\varphi$
%	is called an \emph{$\mathcal{L}$-coloring}. 
%	Denote by $\chi'_L(G)$ the minimum integer $t$
%	such that $G$ is $\mathcal{L}$-colorable
%	whenever $\mathcal{L}$ is a $t$-list assignment.
%	%
%	A fundamental result in list edge coloring theory
%	is the following result proved by Galvin
%	\cite{Ga95}:

%\begin{theorem}
%\label{th:Galvin}
%	For any bipartite multigraph, $\chi_L'(G) = \chi'(G)$.
%\end{theorem}

	For a vertex $u \in Q_d$,
	we denote by $E_u$ the set of edges with one endpoint being $u$,
	and for a (partial) edge coloring $f$ of $Q_d$, let $f(u)$
	denote the set of colors on edges in $E_u$ under $f$.
	If two edges $xy$ and $zt$ of $Q_d$ are in a cycle of length $4$
	in $Q_d$, and all the vertices $x,y,z,t$ are distinct,
	then the edges $xy$ and $zt$ are {\em parallel}.

	As noted above, $Q_d$ decomposes into precisely $d$ dimensional
	matchings. %We denote by $E_M$ the set of edges in the dimensional
	%matching $M$. 
	Note that a dimensional matching in $Q_d$ contains 
	precisely $2^{d-1}$ edges.
	For a $d$-edge coloring $f$ of $Q_d$, a dimensional matching
	$M$ is called {\em a dimensional matching of color $c$ (under $f$)} if
	there are more edges of $M$ colored
	$c$ than by any other color.

	For the proof of Theorem \ref{mainth}, we shall use
	the {\em standard} $d$-edge coloring $h$ of $Q_d$
	where all edges of the $i$th dimensional matching in $Q_d$
	is colored $i$, $i=1,\dots,d$. A cycle is
	{\em $2$-colored}
	if its edges are colored by two distinct colors
	from $\{1,\dots,d\}$.
	%Note that every edge in $Q_d$ is in $d-1$ $2$-colored
	%$4$-cycles under $h$. 
	The following property is crucial for
	our proof of Theorem \ref{mainth}.
	
	\begin{property}
	\label{prop:stand}
		In the standard $d$-edge coloring $h$, every edge of $Q_d$
		is in exactly $d-1$ $2$-colored $4$-cycles.
	\end{property}

	Let $\varphi$ be an $\alpha$-dense precoloring of $Q_d$.
	%By definition, we have $|\varphi(u)| \leq \alpha d$.
	Edges of $Q_d$ which are colored under $\varphi$,
	are called {\em prescribed (with respect to $\varphi$)}.
	For the edge coloring $h$ (or an edge coloring obtained from $h$),
	an edge $e$ of $Q_d$ is called 
	{\em requested (under $h$ with respect to $\varphi$)} 
	if $h(e) = c$ and
	$e$ is adjacent to an edge $e'$ such that $\varphi(e')=c$.
	
	Consider a $\beta$-sparse list assignment $L$ for $Q_d$.
	For the edge coloring $h$ (or an edge coloring obtained from $h$),
	an edge $e$ of $Q_d$ is called a \textit{conflict edge (of $h$ with 
	respect to $L$)} 
	if $h(e) \in L(e)$.
	An \textit{allowed cycle (under $h$ with respect to $L$)} of $Q_d$ is a
	$4$-cycle
	$\mathcal{C}=uvztu$ in $Q_d$ that is $2$-colored
	under $h$, and
	such that
	interchanging colors on $\mathcal{C}$
	yields a proper $d$-edge coloring
	$h_1$ of $Q_d$
	where none of $uv$, $vz$, 
	$zt$, $tu$ is a conflict edge.
	We call such an interchange {\em a swap in $h$}.
	\vspace{0.5cm}

Instead of proving Theorem \ref{mainth} we shall in fact prove
the following theorem, which is easily seen
so imply Theorem \ref{mainth}.

\begin{theorem}
\label{th:main2}
	There are constants $\alpha>0$, $\beta>0$ and $d_0$, such that  
	for every positive integer $d \geq d_0$, if $\varphi$
	is an $\alpha$-dense precoloring of $Q_d$
	and $L$ a $\beta$-sparse list assignment for $Q_d$,
	and $\varphi(e) \notin L(e)$ for every edge $e \in E(Q_d)$,
	then there is an extension of
	$\varphi$ which avoids $L$.
\end{theorem}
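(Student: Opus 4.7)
The plan is to start from the standard $d$-edge coloring $h$ of $Q_d$ and modify it by a family of swaps on allowed $2$-colored $4$-cycles into a proper $d$-edge coloring $h^*$ with two properties: (a) no edge adjacent to a precolored edge $e'$ has color $\varphi(e')$ under $h^*$; and (b) $h^*(e)\notin L(e)$ for every edge $e$. Given such an $h^*$, one simply overwrites, at each precolored edge $e'$, the color $h^*(e')$ by $\varphi(e')$: condition (a) keeps the coloring proper, while (b) together with the assumption $\varphi(e)\notin L(e)$ guarantees the resulting coloring avoids $L$. The whole task therefore reduces to eliminating the requested and conflict edges of $h$ via swaps, without creating new ones.

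By Property~\ref{prop:stand}, each edge of $Q_d$ lies in $d-1$ distinct $2$-colored $4$-cycles. For a bad edge $e$ (requested or conflict under $h$), I would call a $2$-colored $4$-cycle $\mathcal{C}$ through $e$ a \emph{candidate} if $\mathcal{C}$ is allowed in the sense of the definition and, in addition, the swap on $\mathcal{C}$ leaves none of its four edges requested or conflict. Each failed candidate can be blamed on a specific precolored edge, list entry, or dimensional-matching occurrence inside the $27$-neighborhood of $e$ that uses one of the two colors on $\mathcal{C}$, and clauses (i)--(iii) of the $\alpha$-density and $\beta$-sparsity definitions then bound the number of failed candidates by $O((\alpha+\beta)d)$. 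Provided $\alpha$ and $\beta$ are chosen sufficiently small, each bad edge therefore has $\Omega(d)$ candidates; in particular a candidate always exists.

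The heart of the proof is to choose, for every bad edge $e$, a candidate cycle $\mathcal{C}_e$ so that the family $\{\mathcal{C}_e\}$ is pairwise edge-disjoint and the simultaneous swap on all of them yields a coloring with properties (a) and (b). My approach would be to pick each $\mathcal{C}_e$ uniformly at random from its candidate set and apply the symmetric Lov\'asz Local Lemma to bad events of the form ``two chosen cycles share an edge'', ``a swap in the vicinity of $e$ re-creates a requested edge at $e$'', and the obvious analogues for conflict edges. The $27$-neighborhood hypothesis is calibrated precisely so that each such event has probability tending to $0$ with $\alpha+\beta$ and depends on only a bounded number of other events. The main obstacle is this bookkeeping: a single swap disturbs four edges, each can interact with further swaps at distance up to two via shared parallel $4$-cycles, and the definitions of requested and conflict edges each add another layer of indirect dependence. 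Verifying that no dependency or probability estimate escapes the $27$-neighborhood, and extracting explicit numerical values of $\alpha$ and $\beta$ for which the LLL condition is satisfied, is where the bulk of the work lies. Theorem~\ref{secondth} is then the degenerate case in which all nontrivial edges sit on a single distance-$3$ matching, so that candidate cycles through distinct bad edges are automatically edge-disjoint and non-interacting, and a simple greedy choice replaces the LLL step.
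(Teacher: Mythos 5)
Your proposal contains a conceptual gap that makes the single-swap plan unworkable at the prescribed edges, and it also skips a step that the paper needs even before any swapping begins.

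First, note that condition (a) is not something you can satisfy and then "overwrite." In a proper $d$-edge coloring of the $d$-regular graph $Q_d$, every color appears exactly once at every vertex. If no edge adjacent to a precolored edge $e'$ has color $\varphi(e')$, then $\varphi(e')$ must already sit on $e'$ itself; so (a) is equivalent to $h^*(e')=\varphi(e')$, and the "overwrite" is vacuous. This in itself is not fatal, but it shows that your real task is to force $h^*(e')=\varphi(e')$, which is precisely where your cycle-swap bookkeeping breaks. Consider a prescribed edge $e'=v_2v_3$ with $h(e')=c_1\neq c_2=\varphi(e')$, and let $v_2v_6$ be the requested edge at $v_2$ (the edge at $v_2$ with color $c_2$). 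If you swap any $2$-colored $4$-cycle through $v_2v_6$ with colors $c_2,c_3$ other than the one that closes up through $e'$ itself, the color $c_2$ simply migrates to the fourth edge of that cycle at $v_2$, which is again adjacent to $e'$ and again requested. So for a requested edge of this kind, essentially all "candidates" fail, not $O((\alpha+\beta)d)$ of them; your estimate "each bad edge therefore has $\Omega(d)$ candidates" is not true for requested edges. The only single swap that helps is the unique $2$-colored $4$-cycle through $e'$ with colors $c_1,c_2$, and nothing in the hypotheses guarantees that this one cycle is allowed (its other three edges could be conflict, prescribed, or requested). This is exactly why the paper does not work edge-by-edge with one swap: Step~IV constructs, for each prescribed edge $e$, a gadget $T_e$ of roughly a dozen edges on which a \emph{cascading sequence of five nested swaps} is performed, so that each inner $4$-cycle becomes allowed after outer pre-swaps. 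Your LLL scheme with a single randomly chosen cycle per bad edge has no analogue of this cascade.

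Second, you start from the standard coloring $h$ directly, but nothing forces $h$ to have few conflict edges at a given vertex: condition (ii) of $\beta$-sparsity only bounds, per vertex, how often \emph{a single color} appears in incident lists, so all $d$ edges incident to a vertex $u$ could be conflict edges of $h$ (each for a different reason $i\in L(uv_i)$). No sequence of $2$-colored $4$-cycle swaps can repair that locally, because a swap at $u$ just exchanges two colors on two edges of $E_u$. The paper's Step~I resolves this by choosing a permutation $\rho$ of the $d$ colors (via a counting/union-bound argument) so that the relabelled coloring $h'$ has at most $\gamma d$ conflict and requested edges at each vertex, in each dimensional matching within each neighborhood, and so that every edge lies on at least $(1-\tau)d$ allowed cycles. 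Without this relabelling, your local density estimates do not hold, and neither greedy nor LLL gets off the ground. Finally, you conflate the roles of the paper's Steps II and III: conflict edges are not merely "eliminated" but turned into additional prescribed edges $\varphi'$ (Lemma~\ref{gamma}), and clash edges (edges simultaneously prescribed and requested) are cleaned up separately (Lemma~\ref{request}) before the $T_e$ gadgets are built. These steps are what make it possible, in Step~IV, to assume each requested edge is adjacent to at most one offending prescribed edge; your proposal does not establish this structural invariant.
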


Below we outline the proof of Theorem \ref{th:main2}.
Let $h$ be the standard proper $d$-edge coloring of $Q_d$
defined above, $\varphi$ an $\alpha$-dense precoloring
of $Q_d$, and $L$ a $\beta$-sparse list assignment for $E(Q_d)$.

%
%4 lemmas; 4 steps!
%

\begin{enumerate}

	\item[Step I.] Given the standard $d$-edge coloring $h$
	of $Q_d$, find a permutation $\rho$ of the elements of the
	set $\{1,\dots,d\}$
	such that in the proper $d$-edge-coloring $h'$ obtained
	by applying $\rho$ to the colors used in $h$, 
	locally, each dimensional matching
	in $Q_d$ contains ``sufficiently few'' conflict edges with $L$,
	as well as ``sufficiently few'' requested edges with respect to $\varphi$.
	Moreover, we require that each vertex $u$ of $Q_d$ satisfies
	that $E_u$ contains ``sufficiently few'' conflict and requested edges,
	and that each edge of $Q_d$ belongs to ``many'' allowed cycles
	under $h'$. These conditions shall be more precisely articulated below.

	\item[Step II.] From the precoloring $\varphi$ of $Q_d$,
	define a new edge precoloring $\varphi'$ such that an
	edge $e$ of $Q_d$ is colored under $\varphi'$ if and only if
	$e$ is colored under $\varphi$ or $e$ is a conflict edge
	of $h'$ with respect to $L$.
	We shall also require that 
	locally,
	each of the colors in $\{1,\dots,d\}$
	is used a bounded number of times under $\varphi'$.
	
	%FORTSÄTT!
	
	\item[Step III.]
	From $h'$, construct a proper $d$-edge coloring $h''$
	of $Q_d$ such that under $h''$, no edge in $Q_d$ is both requested
	and prescribed (with respect to $\varphi'$); this is done by
	swapping on a set of disjoint allowed $4$-cycles.
	We also require that each requested
	edge $e$ of $h''$ is adjacent to at most one edge $e'$
	such that $h''(e) = \varphi'(e')$.
	
	\item[Step IV.] For each edge $e$ of $Q_d$ that is prescribed
	with respect to $\varphi'$, construct a subset $T_e \subseteq E(Q_d)$,
	such that performing a series of swaps in $h''$ on allowed cycles,
	all edges of which are in $T_e$, yields a coloring $h''_1$ where
	$h''_1(e) = \varphi'(e)$. Moreover, if $e$ and $e'$ are prescribed
	edges of $Q_d$, then the sets $T_{e'}$ and $T_e$ will be disjoint.
	Thus, performing the series of swaps on all the sets $T_e$
	associated with prescribed edges $e$ yields a proper $d$-edge coloring $\hat h$
	which is an extension of $\varphi'$ (and thus $\varphi$), 
	and which avoids $L$.
	
	%\bigskip
	
\end{enumerate}

%------------------------------------------------------------------------------------------------------------------------------------------------------------------------------------------------------------------------

\section{Proofs}

In this section we prove Theorem \ref{th:main2}. In the proof we shall verify
that it is possible to perform Steps I-IV described above
to obtain a proper $d$-edge-coloring of $Q_d$ that is an extension of $\varphi$
and which avoids $L$. This is done by proving a lemma in each step.

We will not specify the value of $d_0$ in the proof but rather
assume that $d$ is large enough whenever necessary. Since the proof will contain
a finite number of inequalities that are valid if $d$ is large enough, this
suffices for proving Theorem \ref{th:main2}.

The proof of Theorem \ref{th:main2} involves a number of 
functions and parameters:
$$\alpha, \beta, \gamma, \kappa, \epsilon, \epsilon_0, \tau,$$
and a number of inequalities that they must satisfy. For the reader's convenience,
explicit choices for which the proof holds are presented here:
$$\alpha = 10^{-622}, \beta= 2 \cdot 10 ^{-622}, \gamma=2^{-11},
\kappa= 9/2^{11} $$
$$\epsilon =  2^{-3}, \epsilon_0=2^{-8}, \tau=2^{-7}.$$

We remark that since the numerical values of $\alpha$ and $\beta$ are not anywhere near
what we expect to be optimal, we have not put an effort into choosing optimal values
for these parameters. 
Finally, for simplicity of notation,
we shall omit floor and ceiling signs whenever these are not crucial.

\begin{proof}[Proof of Theorem \ref{th:main2}]
Let $\varphi$ be an $\alpha$-dense precoloring of $Q_d$, and let $L$
be a $\beta$-sparse list assignment for $Q_d$. Moreover, let $h$ be
the standard $d$-edge coloring defined above. 

\bigskip

\noindent
{\bf Step I:} We use the following lemma
for constructing a required $d$-edge-coloring $h'$ from $h$.
%LEMMA 0.1
\begin{lemma}
	\label{alpha}
          Let $\gamma, \tau <1$ be 
					constants such that $0< \alpha, \beta \leq \gamma$, 
         $2^{\frac{1}{ \gamma}+1}e \alpha < \dfrac{\gamma}{3}$, 
         $2^{\frac{1}{ \gamma}}e \beta <  \dfrac{\gamma}{3}$
         and $2^{\frac{2}{\tau-2\beta} +1}e \beta<\tau-2\beta$.
        There is a permutation $\rho$ of $\{1,\dots,d\}$,
				such that applying $\rho$ to
				the set of colors $\{1,\dots,d\}$ 
				used in $h$, we obtain a $d$-edge coloring $h'$ of $Q_d$
				satisfying the following:
        \begin{itemize}
	\item[(a)] For every $26$-neighborhood 
	$W$, and every dimensional
	matching $M$, 
	at most $\gamma d$ edges of $M \cap E(W)$ are requested.
	
	\item[(b)] For every $27$-neighborhood $W$, and every dimensional
	matching $M$, at most $\gamma d$ edges of $M \cap E(W)$ are conflict.
	
	\item[(c)] No vertex $u$ in $Q_d$ satisfies that $E_u$ contains 
	more than $\gamma d$ requested edges. 
	
	\item[(d)] No vertex $u$ in $Q_d$ satisfies that $E_u$ contains 
	more than $\gamma d$ conflict edges. 
	
	\item[(e)] Each edge in $Q_d$ belongs to at least $(1-\tau)d$ 
	allowed cycles.
	
	\end{itemize}
	%By setting $\tau=1-C$, we can conclude that each edge of $Q_d$ 
	%belongs to at most $\tau d$ non-allowed cycles.
	\end{lemma}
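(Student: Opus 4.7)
The approach is to choose $\rho$ uniformly at random from the symmetric group $S_d$, form $h' = \rho\circ h$, and show via union‑bound/concentration arguments that with positive probability all five conditions hold; an existence argument then produces the required permutation. Conditions (a) and (b) can be dispatched \emph{deterministically}, for every $\rho$. For (b), an edge $e\in M_i\cap E(W)$ is a conflict edge of $h'$ iff $\rho(i)\in L(e)$, so item (iii) of $\beta$-sparseness bounds the number of such edges by $\beta d\leq\gamma d$ immediately. For (a), a requested edge of $M_i\cap E(W)$ must have an adjacent precolored edge of color $\rho(i)$; such precolored edges lie in the $27$‑neighborhood of any edge centering $W$ and number at most $\alpha d$ by (ii) of $\alpha$-density, and each is adjacent to at most two edges of the fixed matching $M_i$, giving the bound $2\alpha d\leq\gamma d$.

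For (c), (d), (e) the main tool is the standard tail estimate for a uniform random permutation: if $T_1,\ldots,T_d\subseteq\{1,\ldots,d\}$ with $|T_i|\leq s$, then conditioning on $\rho$ one coordinate at a time yields
\[
\Pr\!\left[\,|\{i:\rho(i)\in T_i\}|\geq k\,\right]\;\leq\;\binom{d}{k}\!\left(\frac{s}{d-k+1}\right)^{\!k}\;\leq\;\left(\frac{es}{k(1-k/d)}\right)^{\!k}.
\]
For (c), take $T_i=P_u\cup P_{v_i}$, where $P_u$ and $P_{v_i}$ denote the colors precolored at $u$ and at its dimension‑$i$ neighbor $v_i$; then $|T_i|\leq 2\alpha d$ by (i) of $\alpha$-density, and at the threshold $k=\gamma d$ the hypothesis $2^{1/\gamma+1}e\alpha<\gamma/3$ is precisely what is needed so that the tail bound times the number $2^d$ of vertices is smaller than $1/3$. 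For (d), take $T_i=L(uv_i)$; $|T_i|\leq\beta d$ by (i) of $\beta$-sparseness, and $2^{1/\gamma}e\beta<\gamma/3$ plays the analogous role.

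Condition (e) requires one additional idea. For a fixed edge $e=uv\in M_i$, each $2$‑colored $4$‑cycle $\mathcal{C}_j$ through $e$ lies in dimensions $i,j$ for some $j\neq i$; its edges are $uv,\,vv',\,v'u',\,u'u$ with $uv,v'u'\in M_i$ and $vv',u'u\in M_j$. After the swap, $\mathcal{C}_j$ fails to be allowed iff $\rho(j)\in L(uv)\cup L(v'u')$ or $\rho(i)\in L(vv')\cup L(u'u)$. The second disjunct involves the \emph{single fixed} color $\rho(i)$, and by (ii) of $\beta$-sparseness the number of $j$ for which $\rho(i)$ lies in the list of an $M_j$-edge at $u$ or at $v$ is bounded \emph{deterministically} by $2\beta d$. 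The remaining non‑allowed cycles are counted by $|\{j\neq i:\rho(j)\in L(uv)\cup L(v'u')\}|$ with target sets of size at most $2\beta d$, and the tail estimate applies at threshold $(\tau-2\beta)d$. The hypothesis $2^{2/(\tau-2\beta)+1}e\beta<\tau-2\beta$ then makes the union bound over the $d\cdot 2^{d-1}$ edges succeed.

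The main technical obstacle I expect is precisely the deterministic/probabilistic split in (e): without isolating the $\rho(i)$ contribution, the single random value would influence too many cycles through $e$ to be controlled by the tail bound, whereas the remainder, depending only on the varying indices $j$, fits the standard permutation concentration. Beyond this the argument is bookkeeping: one verifies the permutation tail inequality (routine, via the one‑coordinate‑at‑a‑time conditioning) and checks that with the paper's numerical constants the three failure probabilities sum to strictly less than one — a finite calculation made straightforward by the fact that $\alpha$ and $\beta$ are chosen far smaller than the inequalities strictly require.
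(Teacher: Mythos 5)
Your proposal is correct and follows essentially the same route as the paper: the paper counts bad permutations rather than phrasing things probabilistically, but the deterministic disposal of (a) and (b) via the neighborhood conditions, the union-bound/tail estimates for (c) and (d), and the key split in (e) between the single color $\rho(i)$ (bounded deterministically by $2\beta d$ via sparsity at $u$ and $v$) and the varying colors $\rho(j)$ are exactly the paper's argument. Your bound of $2\alpha d$ in (a) is in fact slightly more careful than the paper's stated $\alpha d$ (one precolored edge can witness two requested edges of a perfect matching), and is comfortably absorbed by the hypotheses.
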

	
	\begin{proof}
	Let $A$, $B$, $C$, $D$ and $E$ be the number of permutations which do not 
	fulfill the conditions $(a)$, $(b)$, $(c)$, $(d)$ and $(e)$, respectively. 
	Let $X$ be the number of 
	permutations satisfying the five conditions 
	$(a)$, $(b)$, $(c)$, $(d)$ and $(e)$.
	There are $d!$ ways to permute the colors, so we have
	$$X \geq d! - A- B - C - D - E$$ 
	We will now prove that $X$ is greater than $0$.
	
	\begin{itemize}
	\item Recall that an edge $e$ is requested 
	if $e$ is adjacent to an edge $e'$ such that
	$h(e)=\varphi(e')$. Let $M'$ be a dimensional matching,
	and consider a subset $M \subseteq M'$ of all edges in $M'$ that are
	contained in a given
	$26$-neighborhood $W_1$. Then every edge of $M$ and 
	every edge adjacent to an edge of $M$ is contained in a
	$27$-neighborhood $W_2$ containing $W_1$. 
	Since all edges in $M$ have the same color in any
	edge coloring obtained from $h$ by permuting colors, and 
	there are
	at most $\alpha d$ precolored edges with color $i$
	in $W_2$, $i =1,\dots, d$, the maximum number of requested 
	edges in $M$
	is $\alpha d$. In other words, 
	no subset of a dimensional matching contained in a $26$-neighborhood
	contains more than $\alpha d$ requested edges.
	Since $\gamma \geq \alpha$, this means that
	all permutations satisfy condition $(a)$ or $A=0$.
	
	\item Since all edges that are in the same dimensional 
	matching have the same color under $h$ and 
	for every $27$-neighborhood $W$, and every 
	dimensional matching $M$, any color appears at most $\beta d$
	times in lists of edges of $M$ contained in $W$, 
	we have that the maximum number of conflict edges in 
	a subset of a given dimensional matching contained in a 
	$27$-neighborhood
	is $\beta d$. %that is, no dimensional matching in $N_{\omega}$ contains 
	%more than $\gamma d$ conflict edges
	Since $\gamma \geq \beta$, this means that
	all permutations satisfy condition $(b)$ or $B=0$.
		
	\item To estimate $C$, let $u$ be a fixed vertex of $Q_d$,
	and let $S$ be a set of size $\gamma d$ %($|S|=\gamma d$) 
	of edges of $E_u$. 
	There are $d \choose \gamma d$ ways to choose $S$. For a 
	vertex $v$ adjacent to $u$, 
	if $uv$ is a requested edge, then the colors used in $h$
	should be permuted in such a way that in the resulting coloring $h'$,
	$uv$ is colored by some color in the set
	$\{(\varphi(u) \cup \varphi(v)) \setminus \varphi(uv)\}$.
	Since $|\varphi(u)| \leq \alpha d$ 
	and $|\varphi(v)| \leq \alpha d$, there are at 
	most $(2 \alpha d)^{\gamma d}$
	ways to choose which colors from $1,2,\dots,d$ to assign to
	the edges in $S$ so that all
	edges in $S$ are requested. The rest of the colors can be arranged in any 
	of the $(d - \gamma d)!$ possible ways. In total this gives at most
	$${d \choose \gamma d}(2\alpha d)^{\gamma d}(d-\gamma d)! 
	= \dfrac{d!(2\alpha d)^{\gamma d}}{(\gamma d)!}$$
	permutations that do not satisfy condition $(c)$ on vertex $u$.
	
	There are $2^d$ vertices in $Q_d$, so we have
	$$C \leq 2^d \dfrac{d!(2\alpha d)^{\gamma d}}{(\gamma d)!}$$
	
	\item  To estimate $D$, let $u$ be a fixed vertex of $Q_d$, and
	let $S$ be a set of size $\gamma d$ ($|S|=\gamma d$) 
	of edges from $E_u$.
	For a vertex $v$ adjacent to $u$, if $uv$ is a conflict edge,
	then the colors used in $h$
	should be permuted in such a way that in the resulting coloring $h'$,
	the color of $uv$ is in $L(uv)$.
	Since $|L(uv)| \leq \beta d$, there are at most $(\beta d)^{\gamma d}$
	ways to choose which colors from $\{1,2,\dots,d\}$
	to assign to the edges in $S$ so that all
	edges in $S$ are conflict. The rest of the colors can be arranged in any 
	of the $(d - \gamma d)!$ possible ways. In total this gives at most
	$${d \choose \gamma d}(\beta d)^{\gamma d}(d-\gamma d)! 
	= \dfrac{d!(\beta d)^{\gamma d}}{(\gamma d)!}$$
	permutations that do not satisfy condition $(d)$ on vertex $u$.
	There are $2^d$ vertices in $Q_d$, so we have
	$$D \leq 2^d \dfrac{d!(\beta d)^{\gamma d}}{(\gamma d)!}$$
	
	\item To estimate $E$, let $uv$ be a fixed edge of $Q_d$. 
	Each cycle $\mathcal{C} = uvztu$ 
	containing $uv$ is uniquely defined by an edge 
	$zt$ which is parallel with $uv$. Moreover, a permutation $\varsigma$ 
	is in $E$ if 
	and only if there are more than $\tau d$ choices 
	for $zt$ so that %the swap along
	$\mathcal{C}$ is not allowed. We shall count the number of 
	ways $\varsigma$ could be 
	constructed for this to happen. First, note that for each 
	choice of color $c_1$ from $\{1,\dots, d\}$, for the
	dimensional matching which contains $uv$, there are up 
	to $2\beta d$ cycles that
	are not allowed because of this choice. This follows from the fact
	that there are at most $\beta d$ choices
	for $t$ (or $z$) such that $L(ut)$ (or $L(vz)$) contains $c_1$. 
	So for a permutation 
	$\varsigma$ to belong to $E$, $\varsigma$ must satisfy that 
	at least $(\tau-2\beta)d$ cycles
	containing $uv$ are forbidden because of 
	the color assigned to the dimensional matching containing $ut$ and
	$vz$.
	%the restriction of color 
	%on $ut$ and $vz$.
	
	Let $S$ be a set of edges, $|S|=(\tau-2\beta) d$, such that for
	every edge $zt \in S$,
	the cycle $\mathcal{C}= uvztu$ is not allowed because
	of colors assigned to
	$ut$
	and $vz$. 
	There are ${d-1} \choose {(\tau - 2\beta)d}$ ways to choose $S$.
	Furthermore, $L(uv)$ and $L(zt)$ contain at most $\beta d$ colors
	each, so there are at most
	$2\beta d$ choices for a color for the dimensional matching
	containing $ut$ and $vz$ 
	that would make $\mathcal{C}$ disallowed because
	of the color assigned to this dimensional matching.
	The remaining colors can be permuted in 
	$(d-1-(\tau-2\beta)d)!=((1-\tau+2\beta)d-1)!$ ways. 
	
	Hence, the total number of permutations $\sigma$ with not 
	enough allowed cycles for 
	a given edge is bounded from above by
	$$d{{d-1} \choose {(\tau - 2\beta)d}}{(2\beta d)^{(\tau - 2\beta)d}}
	{((1-\tau+2\beta)d-1)!}
	= \dfrac{d!{(2\beta d)^{(\tau - 2\beta)d}}}{((\tau - 2\beta)d)!}$$
	and the total number of permutation $\sigma$ that have too 
	few allowed cycles for at 
	least one edge is bounded from above by
	$$ 2^{d-1}d\dfrac{d!{(2\beta d)^{(\tau - 2\beta)d}}}
	{((\tau - 2\beta)d)!}$$
	\end{itemize}

	Hence,
	$$X \geq d! - 2^d \dfrac{d!(2\alpha d)^{\gamma d}}{(\gamma d)!}
	- 2^d \dfrac{d!(\beta d)^{\gamma d}}{(\gamma d)!}
	- 2^{d-1}d\dfrac{d!{(2\beta d)^{(\tau - 2\beta)d}}}
	{((\tau - 2\beta)d)!}$$
	
	Using Stirling's approximation, $n! \geq n^ne^{-n}$ and $2^{d-1}d <\dfrac{2^{2d}}{3}$, we have
	$$X \geq d!\Big(1-2^d \dfrac{e^{\gamma d}(2\alpha d)^{\gamma d}}{(\gamma d)^{\gamma d}}
	- 2^d \dfrac{e^{\gamma d}(\beta d)^{\gamma d}}{(\gamma d)^{\gamma d}}
	- 2^{2d}\dfrac{e^{(\tau - 2\beta)d}{(2\beta d)^{(\tau - 2\beta)d}}}
	{3((\tau - 2\beta)d)^{(\tau - 2\beta)d}}\Big)$$
	$$X > d!\Big(1-\big(\dfrac{2^{\frac{1}{\gamma}+1}e \alpha}{\gamma}\big)^{\gamma d} -
	\big(\dfrac{2^{\frac{1}{\gamma}}e \beta}{\gamma}\big)^{\gamma d}
	- \dfrac{1}{3}\big(\dfrac{2^{\frac{2}{\tau-2\beta} +1}e \beta}
	{\tau-2\beta}\big)^{\tau-2\beta d} \Big)$$
	
	Using the conditions 
	$\dfrac{2^{\frac{1}{\gamma}+1}e \alpha}{\gamma} <\dfrac{1}{3}$,
	$\dfrac{2^{\frac{1}{\gamma}}e \beta}{\gamma} <\dfrac{1}{3}$, and
	$\dfrac{2^{\frac{2}{\tau-2\beta} +1}e \beta}{\tau-2\beta}<1$, we have
	
	$$\big(\dfrac{2^{\frac{1}{\gamma}+1}e \alpha}
	{\gamma}\big)^{\gamma d} < \dfrac{1}{3}
	\hspace{0.2cm} \mbox{and} \hspace{0.2cm} 
	\big(\dfrac{2^{\frac{1}{\gamma}}e \beta}{\gamma}\big)^{\gamma d} < 
	\dfrac{1}{3}
	\hspace{0.2cm} \mbox{and} \hspace{0.2cm} 
	\dfrac{1}{3}\big(\dfrac{2^{\frac{2}{\tau-2\beta} +1}e \beta}
	{\tau-2\beta}\big)^{\tau-2\beta d} <\dfrac{1}{3}$$
	
	This implies $X >0$.

	\end{proof}

\bigskip

\noindent
{\bf Step II:} Let $h'$ be the proper $d$-edge coloring satisfying conditions
(a)-(e) of Lemma \ref{alpha} obtained in the previous step.

We use the following lemma for extending $\varphi$ to a 
proper $d$-edge precoloring $\varphi'$ of $Q_d$, such that an
edge $e$ of $Q_d$ is colored under $\varphi'$ if and only if
$e$ is precolored under $\varphi$ or $e$ is a conflict edge of
$h'$ with $L$.

%%%%%%%%%%LEMMA_02%%%%%%%%%%
	\begin{lemma}
	\label{gamma}
        Let $\alpha', \epsilon_0, \gamma, \kappa$ be constants such that
         $\alpha'=\max(\alpha + \gamma, \alpha +\epsilon_0)$,  
         $\kappa \geq \max(\alpha + \gamma, 
				\alpha +\epsilon_0, \gamma +\epsilon_0)$
         and
         $$d - \beta d -2 \alpha d -  2 \gamma d - 
				\dfrac{4\gamma}{\epsilon_0}d 
				- \frac{\alpha}{\epsilon_0}d \geq 1.$$  
        There is a proper $d$-edge precoloring $\varphi'$ of
				$Q_d$ satisfying the following:
        \begin{itemize}
        \item[(a)] %Let $\varphi'(uv)$ be the color of the edges $uv$, 
        %then 
				$\varphi'(uv)=\varphi(uv)$ for any edge  $uv$ of $Q_d$
				that is precolored under $\varphi$.
        \item[(b)] For every conflict edge $uv$ of $h'$, 
				$uv$ is colored under $\varphi'$ and
				%there is a precolor $\varphi'(uv)$ such 
        %that 
				$\varphi'(uv) \notin L(uv)$.
	
				\item[(c)] There are at most $\alpha' d$ precolored edges at 
				each vertex of $Q_d$ under $\varphi'$.
				
	\item[(d)] For every $12$-neighborhood $W$ in $Q_d$,
	there are at most $\alpha' d$ precolored edges with color
	 $i$ in $W$, $i =1,\dots, d$, under $\varphi'$.
	
	\item[(e)] For every $12$-neighborhood $W$ in $Q_d$,
	and every dimensional matching $M$,
	at most $\alpha' d$ edges of $M$ are precolored under $\varphi'$
	in $W$.
	%(i.e. in every color class in the 
	%standard $d$-edge coloring $h$ of $Q_d$).
	\end{itemize}
	Furthermore, the edge coloring $h'$ of $Q_d$ %which satisfies 
	%the five conditions $(a), (b), (c), (d), (e)$
  and the precoloring $\varphi'$ of $Q_d$ satisfy that 
	%if requested edge and prescribed edge are 
	%defined the same way as before, then 
	\begin{itemize}
	\item[(f)] For every $11$-neighborhood $W$ in $Q_d$, %\omega-5?
	and every dimensional matching $M$, at most
	$\kappa d$ edges of $M \cap E(W)$ are requested.
	\item[(g)] No vertex $x$ in $Q_d$ satisfies that $E_x$ contains more than 
	$\kappa d$ requested edges.
	\end{itemize}
	\end{lemma}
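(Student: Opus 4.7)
My plan is to construct $\varphi'$ by a greedy extension of $\varphi$. I would enumerate the conflict edges of $h'$ with respect to $L$ that are not already precolored under $\varphi$ in an arbitrary order $e_1,\dots,e_k$, and assign a color to each $e_i$ in turn so as to maintain all of the invariants (a)--(g). At each step, given the current partial extension $\tilde\varphi$ of $\varphi$ and a conflict edge $e_i=uv$ to color, I would show that the set of ``forbidden'' colors has total size at most $d-1$, so a valid choice always exists. Property (a) is preserved automatically, (b) is guaranteed by the first forbidden set, and (c)--(g) are maintained as running invariants throughout the process.

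The forbidden set decomposes into four natural groups whose sizes exactly match the terms of the given inequality. First, to guarantee (b) we avoid all colors in $L(uv)$, contributing at most $\beta d$. Second, to keep $\tilde\varphi$ proper we avoid the colors already used at $u$ or $v$; since $\varphi$ precolors at most $\alpha d$ edges at each vertex and Lemma~\ref{alpha}(d) bounds the conflict edges at each vertex by $\gamma d$, this contributes at most $2\alpha d+2\gamma d$. Third, to preserve (d) and (e), we forbid any color whose count in some $12$-neighborhood containing $uv$ has already reached the cap $\alpha' d=\alpha d+\epsilon_0 d$; since the baseline under $\varphi$ is at most $\alpha d$ per color per neighborhood, the number of saturated colors is bounded by an averaging argument, giving the term $\frac{\alpha}{\epsilon_0}d$. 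Fourth, for (f) and (g) the crucial observation is that coloring $uv$ with color $c$ newly \emph{requests} only the (at most two) edges at $u$ and $v$ whose $h'$-color is $c$; a choice of $c$ is dangerous only when the corresponding $h'$-neighbor at $u$ or $v$ already lies in a ``nearly full'' $11$-neighborhood/dimensional matching or star, and using Lemma~\ref{alpha}(a), (c) as the baselines (bounded by $\gamma d$) and $\epsilon_0 d$ as the slack, an averaging argument bounds the number of such dangerous colors by $\frac{4\gamma}{\epsilon_0}d$, the factor $4$ reflecting the two endpoints and the two conditions.

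Summing the four bounds and applying the hypothesis
\[
d-\beta d-2\alpha d-2\gamma d-\tfrac{4\gamma}{\epsilon_0}d-\tfrac{\alpha}{\epsilon_0}d\ \geq\ 1
\]
yields a usable color for $e_i$ at each step, and the process terminates with the desired precoloring $\varphi'$. The part I expect to be most delicate is the bookkeeping for (f) and (g): a single edge belongs simultaneously to many $11$-neighborhoods, so one has to be careful not to overcount when identifying dangerous colors. I would resolve this by noting that the $h'$-color $c$ already determines a \emph{unique} potential newly-requested edge at each of $u$ and $v$, so the total number of $11$-neighborhood/dimensional-matching pairs whose request count is affected by the choice $\varphi'(uv)=c$ is a bounded function of the local structure, and the saturation condition can be pushed onto the single neighborhood containing the candidate requested edge. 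Once this localization is set up, the counting reduces to exactly the averaging estimate quoted above, and the final verification of (c)--(g) for the completed $\varphi'$ follows directly from the invariants maintained at each greedy step.
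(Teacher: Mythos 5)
Your proposal follows essentially the same route as the paper's proof: a greedy coloring of the conflict edges in which, at each step, the forbidden colors are those in $L(uv)$, those already present at $u$ or $v$ (at most $2\alpha d+2\gamma d$), those that would push some color past its cap in a bounded neighborhood (for (d)/(e)), and those whose newly requested edge lands on an already ``overloaded'' vertex or dimensional matching (for (f)/(g)), with the groups bounded via $\epsilon_0 d$-threshold averaging so that their total matches the hypothesis. The only discrepancy is in how you attribute the two averaging terms: the number of saturated colors must be charged against the pool of conflict edges in the enclosing neighborhood (giving roughly $\tfrac{\gamma}{\epsilon_0}d$, or the paper's $\tfrac{\gamma+\alpha}{\epsilon_0}d$), not against the $\varphi$-baseline as your $\tfrac{\alpha}{\epsilon_0}d$ suggests; since your two terms still sum to $\tfrac{4\gamma+\alpha}{\epsilon_0}d$, exactly as in the paper, the argument closes all the same.
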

	
	\begin{proof}
	Consider the edge coloring $h'$ and the precoloring $\varphi$;
	for each $26$-neighborhood $W$,
	no dimensional matching in $Q_d$ contains 
	more than $\gamma d$ requested edges that are in $W$, 
	so the total number of requested edges in $W$ is not greater than
	$\gamma d^2$. Similarly, the total number of conflict edges in 
	each $27$-neighborhood $W$ is 
	not greater than $\gamma d^2$. 
	
	We shall construct the coloring $\varphi'$ by assigning a color
	to every conflict edge; this is done by iteratively constructing
	a $d$-edge precoloring $\phi$ of the conflict edges of $Q_d$;
	in each step we color a hitherto uncolored conflict edge, thereby
	transforming a conflict edge to a prescribed edge.
	At each step of transforming a conflict edge $uv$ into prescribed edge,
	the number of requested edges will increase by
	$2$. Hence, after constructing the proper $d$-edge precoloring 
	$\varphi'$, the total number of requested edges of each 
	$26$-neighborhood 
	is at most $\gamma d^2 + 2 \gamma d^2 = 3\gamma d^2$.
	
	Suppose now that we have constructed the precoloring $\varphi'$.
	A vertex $u$ in $Q_d$ is {\em $\varphi'$-overloaded} if $E_u$
	contains at least $\epsilon_0 d$ requested edges;
	note that no more than $\dfrac{3\gamma d^2}{\epsilon_0 d} 
	= \dfrac{3\gamma}{\epsilon_0}d$ 
	vertices of each $25$-neighborhood are $\varphi'$-overloaded.
	%\omega-2, since edges at greater distance may cause overloaded vertices...
	
	A color $c$ is {\em $\varphi'$-overloaded} in a $t$-neighborhood 
	$W$ if $c$ appears
	on at least $\epsilon_0 d$ edges in $W$ under $\varphi'$; note 
	that at most 
	$$\frac{\gamma d^2}{\epsilon_0d} + \frac{\alpha d^2}{\epsilon_0d}=
	\frac{\gamma + \alpha}{\epsilon_0}d$$ colors are $\varphi'$-overloaded in 
	each $25$-neighborhood $W$. These upper bounds hold
	for any choice of the precoloring $\varphi'$ obtained from $\varphi$
	by coloring the conflict edges of $Q_d$.

	Let $G$ be the subgraph of the hypercube $Q_d$
	induced by all conflict edges of $Q_d$.
	%by joining two vertices in $G$ by an edge if
	%the corresponding edge of $Q_d$ is conflict. %Since $Q_d$ is 
	%bipartite graph, then $G$ is also bipartite graph.
	%No dimensional matching in $Q_d$ contains more than 
	%$\gamma d$ conflict edges, so
	%the total number of edges in $G$ is at most $\gamma d^2$.
	Let us now construct the $d$-edge coloring $\phi$ of $G$.
	%so that taking $\phi$ and $\varphi$ together, we obtain 
	%the required coloring	$\varphi'$.
	We color the edges of $G$ by steps, and in each step we define a 
	list $\mathcal{L}(e)$ of allowed colors for 
	a hitherto uncolored edge $e =uv$ of $G$ by for 
	every color $c \in \{1,\dots, d\}$ %and every edge
	%$e=uv$ 
	including $c$ in $\mathcal{L}(e)$ if 
	\begin{itemize}
	\item $c \notin L(uv)$,
	\item $c$ does not appear in $\varphi(u)$ or $\varphi(v)$,
	or on any previously colored edge of $G$ that is adjacent to $e$.
	\item $c$ is distinct from the color of the edge 
	$uu'$ (or $vv'$) under $h'$
	if $u'$ 
	(or $v'$) is $\varphi'$-overloaded.
	\item $c$ is not $\varphi'$-overloaded in the $25$-neighborhood
	of $e$.
	\end{itemize}
	Our goal is then to pick a color $\phi(e)$ from $\mathcal{L}(e)$ for $e$. 
	Given that this is possible for each edge of $G$, 
	this procedure clearly produces a $d$-edge-coloring $\phi$ of $G$, so that
	$\phi$ and $\varphi$ taken together form a proper $d$-edge precoloring
	of $Q_d$.
	
	Using the estimates above and the facts that $G$ has 
	maximum degree $\gamma d$, and
	$|\varphi(v)| \leq \alpha d$ for any vertex $v$ of $Q_d$,
	we have
	$$\mathcal{L}(e) \geq d - \beta d -2 \alpha d -  2 \gamma d
	- \dfrac{3\gamma}{\epsilon_0}d - \frac{\gamma + \alpha}{\epsilon_0}d,$$
	for every edge $e$ of $G$ in the process of constructing $\phi$, and 
	by assumption
	%No vertex $x$ in $Q_d$ satisfies that $E_x$ contains 
	%more than $\gamma d$ conflict edges, 
	%this means the maximum degree in $G$ is $\gamma d$. Since
	%$$\gamma d \leq d - \beta d -2 \alpha d -  \dfrac{3\gamma}{\epsilon_0}d,$$
	%there is an $\mathcal{L}$-coloring $\psi$ of $G$ by Theorem \ref{th:Galvin}.
	%
	%Our goal is to find an $\mathcal{L}$-coloring $\phi$ of $E(G)$ such that each 
	%color appears on at most $\epsilon_0 d$ edges.
	%Suppose there is some \textit{dense} color $c_o$ in $\mathcal{C}$, 
	%i.e. a color that
	%is used more than $\epsilon_0 d$ times in $\psi$. We will define an 
	%$\mathcal{L}$-coloring $\psi'$ so that for some edge $uv$ 
	%with $\psi(e) =c_o$,
	%$e$ is colored with some 
	%non-dense color under $\psi'$. By iterating this process,
	%we obtain the required coloring $\phi$. 
	%
	%So suppose that $\psi(e) =c_o$. The number dense of colors in $\psi$ 
	%is at most 
	%$\dfrac{\gamma d^2}{\epsilon_0 d}=\dfrac{\gamma}{\epsilon_0} d$.
	%Moreover, there are at most $2\gamma d$
	%distinct colors that are used on edges which are adjacent to $e$.
	%Hence, we can define $\psi'$ from $\psi$ by selecting a new color for $e$
	%so that the resulting coloring is proper if
	%$$d - \beta d -2 \alpha d -  2\gamma d- \dfrac{4\gamma}{\epsilon_0}d \geq 1,$$
	%which holds by assumption. We conclude that the required coloring 
	%$\phi$ exists.
	  $	\mathcal{L}(e) %\geq d - \beta d -2 \alpha d -  2 \gamma d
				%\dfrac{3\gamma}{\epsilon_0}d - \frac{\gamma + \alpha}{\epsilon_0}d 
				\geq 1$.
	Thus, we conclude that we can choose an allowed color for each conflict edge
	so that the coloring $\phi$ satisfies the above conditions.
	This implies that taking $\phi$ and $\varphi$ together we obtain
	a proper $d$-precoloring $\varphi'$ of the edges of $Q_d$.
	%satisfying conditions (a) and (b) of the lemma.
	Let us now prove that the precoloring $\varphi'$ satisfy
	the conditions in the lemma.
	
	Let $\alpha'=\max(\alpha + \gamma, \alpha +\epsilon_0)$.
	%Let $\varphi'$ be the precoloring obtained by putting 
	%$\varphi$ and $\phi$ together.
	Then $\varphi'$ satisfies the following:
	\begin{itemize}
	\item %Let $\varphi'(uv)$ be the color of the edges $uv$,
				If $uv$ is precolored under $\varphi$,
        then $\varphi'(uv)=\varphi(uv)$. For every conflict edge $uv$, 
				there is a precolor $\varphi'(uv)$ such 
        that $\varphi'(uv) \notin L(uv)$.
	\item There are at most $\alpha' d$ precolored edges at each vertex.
	 %\item There are at most $\alpha' d$ precolored edges with color
	 %$i$, $i =1,\dots, d$.
	 %\item There are at most $\alpha' d$ precolored edges in every
	%``dimensional'' matching (i.e. in every color class in the 
	%standard $d$-edge coloring $h$ of $Q_d$).
	\end{itemize}

	Let us next prove that the precoloring $\varphi'$ satisfies conditions
	(d) and (e) of the lemma.
	Suppose that some $12$-neighborhood $W$ in $Q_d$ contains more than
	$\alpha' d$ precolored edges with color $i$, for some $i \in \{1,\dots,d\}$.
	Consider an edge $e$ in $W$ with $\phi(e)=i$. By the construction of $\phi$,
	in the $25$-neighborhood $W'$ of $e$ no color is $\varphi'$-overloaded.
	Note further that every $12$-neighborhood in $Q_d$ that
	$e$ lies in is contained in $W'$; thus $W$ is contained in $W'$, so
	the color $i$ is $\varphi'$-overloaded in $W'$, a contradiction. We conclude
	that condition (d) holds.
	A similar argument shows that condition (e) holds as well.
	
	Let us now turn to conditions (f) and (g).
	There are at most $\alpha' d$ precolored edges with color
	$i$, $i =1,\dots, d$, in every $12$-neighborhood in $Q_d$,
	and all edges that are in the same 
	dimensional mathing
	have the same color under $h'$. This implies that for each 
	$11$-neighborhood $W$
	and every dimensional matching $M$, the maximum number of requested edges 
	in $M$ that are in $W$ is $\alpha' d$. 
	Since $$\kappa \geq \max \{\alpha + \gamma, \alpha +\epsilon_0, 
	\gamma +\epsilon_0\},$$ condition (f) holds. Similarly, 
	at each step of transforming a conflict edge into a prescribed 
	edge under $\phi$, 
	we create $2$ new
	requested edges, $1$ at each vertex which is incident 
	with the conflict edge.
	%Combining this with the third condition, 
	Since the maximum degree in $G$ is $\gamma d$, and no vertex
	is $\varphi'$-overloaded,
	%we can conclude that after the
	%iterative procedure of coloring the edges of $G$, 
	no vertex $x$ in
	$Q_d$ satisfies that $E_x$ contains more than 
	$\epsilon_0 d + \gamma d$ requested edges.
	Thus every 
	vertex $x$ in $Q_d$ satisfies that $E_x$ 
	contains at most $\kappa d$ requested edges.
	%We conclude that condition (1) and (2) of the lemma holds.
	\end{proof}

\bigskip

\noindent
{\bf Step III:}
Let $\varphi'$ be the proper $d$-precoloring of $Q_d$
obtained in the previous step and $h'$ the $d$-edge coloring
of $Q_d$ obtained in Step I.
By a {\em clash edge (of $h'$)} in $Q_d$ we mean an edge which is both
prescribed and requested (under $\varphi'$).
We use the following lemma for constructing, from $h'$, a proper
$d$-edge coloring $h''$ of $Q_d$ with no clash edge.
The coloring $h''$ will also have the property that
every requested edge $e$ of $h''$
is adjacent to at most one prescribed edge $e'$ such that
$h''(e) = \varphi'(e')$.
	
	\vspace{0.5cm}
	
	%%%%%%%%%%LEMMA_03%%%%%%%%%%
	\begin{center}
	
	\end{center}
	\begin{lemma}
	\label{request}
	Let $\kappa, \epsilon, \mu, \tau,
	\alpha'= \max(\alpha + \gamma,\alpha + \epsilon_0)$
	be constants such that 
	$\mu = 3\kappa + \epsilon +1$ and
	$$d-\tau d - 9\kappa d - 3\alpha' d - 3\epsilon d - 
	\dfrac{12\kappa}{\epsilon}d - 3 >0.$$
	By performing a sequence of 
	swaps on disjoint allowed $2$-colored $4$-cycles in $h'$, we
	obtain a proper $d$-edge coloring 
	$h''$ of $Q_d$  satisfying the following:
	\begin{itemize}
	 
	\item[(a)] There is no clash edge in $h''$.
	
	\item[(b)] For each requested edge $e$ of $h''$, $e$
	is adjacent to at most one edge $e'$ satisfying that 
	$h''(e) = \varphi'(e')$.
	%which is requested by more than one prescribed edges in $h''$. 
	
	\item[(c)] For each vertex $u \in V(Q_d)$,
	at most $2\kappa d+\epsilon d +1$ edges incident with
	$u$ appears in swaps for constructing $h''$ from $h'$.
	
	\item[(d)] For every $3$-neighborhood $W$ of $Q_d$,
	and every dimensional matching $M$,
	at most $2\kappa d+\epsilon d +1$ edges
	of $E(W) \cap M$
	%from a dimensional matching $M$ in $W$ 
	appears in swaps for constructing $h''$ from $h'$.
	
	\item[(e)] For every $3$-neighborhood $W$ in $Q_d$, 
	and every dimensional matching $M$,
	there are at most $\mu d$ requested edges in $M \cap E(W)$.
	
	\item[(f)] No vertex in $Q_d$ is incident with 
	more than $\mu d$ requested edges.

	\end{itemize}
	
	\end{lemma}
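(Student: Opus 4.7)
The plan is to build $h''$ from $h'$ by a greedy sequence of swaps. I would enumerate the clash edges $e_1,e_2,\ldots$ of $h'$ in some order and process them one at a time; at step $i$ I pick an allowed $2$-colored $4$-cycle $C_i$ through $e_i$ on which to swap. I insist that $C_1,C_2,\ldots$ be pairwise edge-disjoint so that all of the swaps can be carried out from the single starting coloring $h'$ and yield a well-defined proper $d$-edge coloring $h''$. The choice of $C_i$ needs to accomplish four things: $C_i$ is allowed, the swap kills the clash at $e_i$, it creates no new clash on the three other edges of $C_i$, and it preserves the at-most-one multiplicity from (b). The local invariants (c)--(f) will be carried along as the procedure runs.

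By Lemma~\ref{alpha}(e) there are at least $(1-\tau)d$ allowed cycles through $e_i=uv$. The next step is to upper-bound, category by category, the cycles that must be discarded. \emph{Disjointness with previously chosen cycles}: by the running version of (d), at most $2\kappa d+\epsilon d+1$ edges in any dimensional matching in the $3$-neighborhood of $e_i$ can already have appeared in a swap, blocking that many cycles. \emph{Fixing $e_i$}: the swap replaces $c=h'(e_i)$ by the color $c^{*}$ of the opposite matching of $C_i$, and $e_i$ stays requested exactly when $c^{*}\in \varphi'(u)\cup\varphi'(v)$, a set of size at most $2\alpha'd$ by Lemma~\ref{gamma}(c). \emph{Preventing a new clash on each of $vz,zt,tu$}: each side forbids $O(\alpha'd)$ cycles via Lemma~\ref{gamma}(c)--(e), summing to a $3\alpha'd$ term.

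The subtle case is maintaining (b): I must ensure that no requested edge adjacent to $C_i$ acquires a second prescribed match. Following the threshold idea from Step~II, I would classify vertices and colors in the $3$-neighborhood of $e_i$ as heavy or light, according to whether they support $\geq\epsilon d$ requested edges or $\geq \epsilon d$ precolored edges of a fixed color. The heavy ones number only $O(\kappa/\epsilon\cdot d)$, using Lemma~\ref{gamma}(f) together with the invariant $\mu=3\kappa+\epsilon+1$, while the light ones contribute $O(\epsilon d)$ per side; summing over the three sides of $C_i$ produces the $3\epsilon d$ and $(12\kappa/\epsilon)d$ terms appearing in the hypothesis. Adding all contributions, the number of forbidden cycles through $e_i$ is at most $\tau d+9\kappa d+3\alpha'd+3\epsilon d+(12\kappa/\epsilon)d+3$, which by the displayed hypothesis is strictly less than $d$, so a valid $C_i$ always exists and the greedy procedure goes through.

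The main obstacle is the careful bookkeeping required for (b): it forces us to look up to two hops away from $e_i$ and motivates the heavy/light dichotomy and the $\kappa/\epsilon$ contributions, mirroring the analysis in Lemma~\ref{gamma}. A secondary technicality is verifying that the invariants (e)/(f) survive each swap: a single swap alters only four edges, so it can add at most a constant number of new requested edges at any vertex, and the gap between $\mu d$ and the pre-swap bound $\kappa d$ from Lemma~\ref{gamma}(f),(g) comfortably absorbs this increment. Once these invariants are maintained, the final $h''$ satisfies every conclusion of the lemma.
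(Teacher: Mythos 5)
Your overall strategy (greedily choose pairwise disjoint allowed $4$-cycles through the problematic edges of $h'$, bounding the forbidden choices locally) is the paper's strategy, but there is a genuine gap in which edges you repair. You only process the \emph{clash} edges of $h'$ and then try to ``preserve'' property (b). But (b) can already fail in $h'$: a requested edge $e$ that is \emph{not} prescribed (hence not a clash edge) may be adjacent to two or more prescribed edges $e'$ with $\varphi'(e')=h'(e)$. Your procedure never swaps on such an edge, so unless it is accidentally recolored by some other cycle, it still violates (b) in $h''$. The paper's proof handles exactly this by defining an \emph{unexpected} edge to be either a clash edge or a requested edge adjacent to more than one prescribed edge of its color, and by routing one disjoint allowed $4$-cycle through \emph{each} unexpected edge; Lemma \ref{gamma}(f),(g) bounds the number of unexpected edges per vertex and per dimensional matching in a neighborhood by $\kappa d$, which is what feeds the $9\kappa d$ and $12\kappa d/\epsilon$ terms. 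Your ``heavy/light'' classification of vertices and colors by requested/precolored load is not where these terms come from (they come from an overloading threshold on edges \emph{already used in swaps}, needed to keep (c) and (d) true as the greedy process runs), and your itemized budget is not consistent with the stated total: you charge $2\alpha'd$ for keeping $e_i$ non-requested plus $3\alpha'd$ for the other three sides, already exceeding the $3\alpha'd$ in the hypothesis. In the paper, requiring the three edges $vz,zt,ut$ to be neither prescribed nor requested simultaneously kills the clash at $uv$ (since $h''(uv)=h'(ut)\notin\varphi'(u)\cup\varphi'(v)$) and prevents new unexpected edges, so no separate $2\alpha'd$ charge is needed.

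A secondary error: your justification of (e) and (f) --- ``a single swap alters only four edges, so it can add at most a constant number of new requested edges at any vertex'' --- is not valid, because up to $2\kappa d+\epsilon d+1$ swapped edges can be incident with a single vertex (that is precisely what (c) allows), and each of them can become newly requested. The correct argument bounds the new requested edges by the counts in (c) and (d) themselves, which is why $\mu$ is defined in terms of $\kappa$ and $\epsilon$. With the notion of unexpected edge added and the accounting redone along these lines, your greedy scheme becomes the paper's proof.
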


	\begin{proof}
	An {\em unexpected edge of $h'$} is a clash edge or a
	requested edge $e$
	of $h'$ that is adjacent to more than one edge $e'$
	satisfying that $h'(e)= \varphi'(e')$.
	For constructing $h''$ from $h'$,
	we will perform a number of swaps on $2$-colored $4$-cycles, and
	we shall refer to this procedure as {\em $S$-swap}.
	In more detail, we are going to construct a set $S$ of disjoint
	allowed $4$-cycles, each such cycle containing exactly one
	unexpected edge in $h'$.
	An edge that belongs to a cycle in $S$ is called
	{\em used} in $S$-swap.
	
	Let us first deduce some properties that our set $S$,
	which is yet to be constructed, will satisfy.
	
	By Lemma \ref{gamma}, for every $11$-neighborhood $W$ in $Q_d$,
	and every dimensional matching $M$,
	the number of unexpected edges in $E(W) \cap M$
	is not greater than
	%the number of requested edges 
	%in $M$, 
	$\kappa d$.
	Suppose we have included a $4$-cycle $C$ in
	%For each cycle in 
	$S$. Every edge in $C$ is at distance at
	most $1$ from the unexpected edge contained in $C$; this implies that for 
	every $10$-neighborhood $W$ in $Q_d$, the total number
	of edges in $W$ that are used in $S$-swap is at most $4\kappa d^2$.
	
	A vertex $u$ in $Q_d$ is {\em $S$-overloaded} if $E_u$ contains
	at least $\epsilon d$
	edges that are used in $S$-swap; note that no more than
	$\dfrac{4\kappa d^2}{\epsilon d}=\dfrac{4\kappa}{\epsilon}d$ vertices
	of each $9$-neighborhood are $S$-overloaded.
	A dimensional matching $M$ in $Q_d$ is {\em $S$-overloaded} 
	in a $t$-neighborhood $W$
	if $M \cap E(W)$ contains
	at least $\epsilon d$ edges that are used in $S$-swap; 
	note that for each $10$-neighborhood $W$, no more than 
	$\dfrac{4\kappa}{\epsilon}d$ dimensional matchings
	of $Q_d$ are $S$-overloaded in $W$.
		 
	Using these facts, let us now construct our set $S$ by steps; at each step
	we consider an unexpected edge $e$ 
	and include an allowed $2$-colored $4$-cycle containing $e$ in $S$.
	Initially, the set $S$ is empty. Next, 
	for each unexpected edge $e=uv$ in $Q_d$, 
	there are at least $d- \tau d$ allowed cycles %in the 
	%$(\omega-15)/4$-neighborhood 
	containing $e$.
	We choose an allowed cycle $uvztu$
	which contains $e$ and satisfies the following:

	\begin{itemize}
	\item[(1)] $z$ and $t$ and the dimensional matching that 
	contains $vz$ and $ut$ 
	are not $S$-overloaded in the $9$-neighborhood $W_e$
	of $e$; this eliminates at most $\dfrac{12\kappa}{\epsilon}d$ choices.
	%Every $(\omega-15)/4$-neighborhood $W$ in $Q_d$ that $ut$, $vz$ or $zt$
	%lie in is contained in $W'$; thus 
	
	Note that with this strategy
	for including $4$-cycles in $S$, after completing the
	construction of $S$,
	every vertex is incident with at most
	$2\kappa d+\epsilon d +1$ edges that are used in $S$-swap;
	that is, condition (c) holds.
	
	Furthermore, after we have constructed the set $S$,
	no dimensional matching is $S$-overloaded
	in a $3$-neighborhood of $Q_d$; this
	follows from the fact that
	every $3$-neighborhood $W'$ in
	$Q_d$ that $ut$, $vz$ or $zt$
	belongs to is contained in $W_e$. Moreover, this implies
	that condition (d) holds.
	
	%no dimensional matching %or vertex  
	%becomes $S$-overloaded
	%in a $(\omega-15)/4$-neighborhood $W$ in $Q_d$ 
	%contains more than $2\kappa d+(\epsilon d-1) +2$ or $2\kappa d+\epsilon d +1$
	% used edges in $S$-swap. 
	
	%Note that a vertex does not contain more than $2\kappa d+\epsilon d +1$
	%used edges in $S$-swap in every $(\omega-15)/4$-neighborhood $W$ in 
	%$Q_d$ also means that it does not contain more than $2\kappa d+\epsilon d +1$
	%used edges in $S$-swap in $Q_d$.
	
	\item[(2)] None of the edges $vz$, $zt$, $ut$ are 
	prescribed, or requested, or used before
	in $S$-swap. 
	
	All possible choices for these edges
	are in the $3$-neighborhood $W_e$ of $e$ in $Q_d$.
	By Lemma \ref{gamma}, no vertex  in $W_e$, or 
	subset of a dimensional matching that is in $W_e$,
	contains more than $\kappa d$
	requested edges or $\alpha' d$ prescribed edges. Moreover, $S$-swap
	uses at most $2\kappa d+\epsilon d +1$ edges
	at each vertex and in each subset of a dimensional matching
	contained in $W_e$. Hence, these restrictions
	eliminate at most 
	$3 (\kappa d+\alpha' d) + 3(2 \kappa d+\epsilon d +1)$ 
	or $9 \kappa d+3 \alpha'd+3\epsilon d+3$  choices.
	\end{itemize}
	It follows that we have at least
	$$d -\tau d - 9\kappa d - 3\alpha' d - 3\epsilon d - 
	\dfrac{12\kappa}{\epsilon}d - 3$$
	choices for an allowed cycle $uvztu$ which contains $uv$.
	By assumption, this expression is greater than zero,
	so we conclude that there is a cycle satisfying these conditions,
	and thus we may construct the set $S$ by iteratively adding disjoint allowed
	$2$-colored $4$-cycles such that each cycle contains a unexpected edge.
	%
	%there is still a cycle which satisfies all these conditions, added it to $S$,
	%and we proceed adding cycles without trouble. 
	
	After this process terminates we have a set $S$ of disjoint 
	allowed cycles; %and the edge set of 
	%$S$ containing all unexpected edges, 
	we swap on all
	the cycles in $S$ to obtain the coloring $h''$.
	Note that for the cycle $uvztu$ constructed above, 
	since none of the edges $vz$, $zt$, $ut$
	are prescribed or requested,
	$\{\varphi'(z) \cup \varphi'(t)\}$ does not contain the color $h'(uv)$;
	so after swapping colors
	on the cycle $uvztu$, none of the edges edges $uv, vz$, $zt$, $ut$ are
	unexpected edges in the obtained coloring; that is, condition
	(a) and (b) hold.
	
	Let us finally verify that conditions (e) and (f) hold.
	As noted above, 
	for every dimensional matching $M$ and every $3$-neighborhood $W$,
	%by swapping on the set of cycles of $S$,
	$S$-swap uses at most $2\kappa d + \epsilon d + 1$ from $E(W) \cap M$.
	%for every $3$-neighborhood $W$ in $Q_d$,
	%the number of requested edges in $W$ that
	%are contained in a fixed dimensional matching increases by at 
	%most $2\kappa d + \epsilon d + 1$. If we set
	%$\mu = \kappa + 2\kappa + \epsilon + 1=3\kappa + \epsilon + 1$,
	Moreover, by Lemma \ref{gamma}, $E(W) \cap M$ contains at most
	$\kappa d$ requested edges under $h'$ with respect to $\varphi'$.
	Thus the proper coloring $h''$ satisfies that for
	every dimensional matching $M$ and
	for every $3$-neighborhood $W$ in $Q_d$,
	at most $\mu d$ requested edges are contained in $E(W) \cap M$. 
	Similarly, no vertex $x$ in $Q_d$ satisfies that
	$E_x$ contains more than $\mu d$ requested edges.
	\end{proof}

\bigskip

\noindent
{\bf Step IV:} Let $h''$ be the proper $d$-edge coloring of $Q_d$
obtained in the previous step and let $\varphi'$ be the
precoloring of $Q_d$ obtained in Step II. Then $h''$ and $\varphi'$
satisfies (a)-(f) of Lemma \ref{request}, and also the following:
\begin{itemize}
	\item each vertex of $Q_d$ is incident with at most $\alpha' d$
	edges that are precolored under $\varphi'$;
	\item for every $12$-neighborhood $W$ and every 
	dimensional matching $M$ in $Q_d$, at most $\alpha' d$
	edges of $M$ are precolored under $\varphi'$ in $W$;

	\item for every $12$-neighborhood $W$, 
	there are at most $\alpha' d$ edges that are precolored $i$
	under $\varphi'$ in $W$, $i=1,\dots,d$.
\end{itemize}
	
	As in the proof of Lemma \ref{request},
	we say that an edge $e$ in $Q_d$ with $h'(e) \neq h''(e)$
	{\em is used in $S$-swap}.
	Note that since in every $12$-neighborhood, the number
	of edges that are precolored $i$ ($i \in \{1,\dots,d\}$)
	is at most $\alpha' d$,
	and since the number of precolored edges in a
	subset of a dimensional matching of $Q_d$ that is contained
	in a $12$-neighborhood
	is also bounded,
	there is a bounded number of edges colored $i$ under $h''$ that
	have been used in $S$-swap in each $3$-neighborhood.
	Moreover, since $S$-swap uses a bounded
	number of edges at each vertex, and in the intersection
	of every dimensional matching and $3$-neighborhood
	(by condition (c) and (d) in Lemma \ref{request}),
	most edges in $Q_d$ are in a large number of allowed $2$-colored
	$4$-cycles under $h''$. Those two properties are central for completing
	the proof of Theorem \ref{th:main2} in Step IV; this is done by proving the following lemma.
	
	%%%%%%%%%%LEMMA_04%%%%%%%%%%	
	
	\begin{lemma}
	\label{complete}
	Let $\kappa, \epsilon, \tau, \mu= 3\kappa + \epsilon +1, 
	\alpha'=(\alpha + \gamma,\alpha + \epsilon_0)$ be constants such that 
	$$d - 64\mu d - 64\alpha' d-  32\kappa d-  32\epsilon d - 10\beta d -3\tau d - 
	\dfrac{266\alpha'}{\epsilon}d- 86 >0.$$
	There is a proper $d$-edge coloring of $Q_d$
	that is an extension of $\varphi'$ and which avoids $L$.
	\end{lemma}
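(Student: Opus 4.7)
The plan is to process the prescribed edges of $\varphi'$ one by one; for each prescribed edge $e$, I will construct a set $T_e \subseteq E(Q_d)$ of bounded size together with a short sequence of allowed swaps on $2$-colored $4$-cycles whose edges lie in $T_e$ and whose net effect on $h''$ is to recolor $e$ with $\varphi'(e)$. These sets will be pairwise disjoint, so the swap sequences for different prescribed edges act on disjoint edges and can all be performed simultaneously. The resulting coloring $\hat h$ then agrees with $\varphi'$, and hence with $\varphi$, and avoids $L$: on originally precolored edges by hypothesis, and on former conflict edges because Step II chose their $\varphi'$-colors outside the lists.

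For a prescribed edge $e = uv$ with $\varphi'(e) = a$ and $h''(e) = b$, $a \neq b$, the simplest strategy is a single $\{a,b\}$-swap on the $4$-cycle $C = uv z^* t^* u$, where $vz^*$ and $ut^*$ are the $a$-colored edges at $v$ and $u$ in $h''$, and $t^* z^*$ is the parallel edge (which carries color $b$ whenever no $S$-swap has disturbed the standard structure near $e$). If this swap is allowed and the three non-$e$ edges of $C$ are neither prescribed nor already used in a previously built $T_{e'}$, we take $T_e = E(C)$ and are done. Otherwise we take a detour: pick an intermediate color $c \in \{1,\dots,d\}\setminus\{a,b\}$ and first perform an allowed $4$-cycle swap involving $c$ that either repairs the parallel structure or frees an obstructing edge, then execute the $\{a,b\}$-swap through $e$. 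In this latter case $T_e$ consists of at most seven edges coming from the two $4$-cycles used.

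The heart of the argument is showing that such a color $c$ (or the direct one-swap configuration) always exists. We enumerate the constraints on $c$ and bound the number of disqualified colors. Cycle-existence constraints fail only when a nearby edge has been moved by an $S$-swap; Lemma \ref{request}(c),(d) bounds this by $O((\kappa+\epsilon)d)$. Allowedness of both cycles requires that no edge acquires a color in its list, excluding $O(\beta d)$. None of the non-$e$ edges of the cycles may be prescribed, which by Lemma \ref{gamma}(c)-(e) excludes $O(\alpha' d)$. Preserving the no-clash property and the ``adjacent-to-at-most-one-$\varphi'$-match'' property of Lemma \ref{request}(a),(b) rules out a further $O(\mu d)$ via Lemma \ref{request}(e),(f). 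Lemma \ref{alpha}(e) bounds by $O(\tau d)$ the colors for which too few allowed cycles remain. To preserve pairwise disjointness of the $T_e$'s, no edge of $T_e$ may already lie in some previously constructed $T_{e'}$; since each $T_{e'}$ has at most seven edges and the number of prescribed edges $e'$ near $e$ is controlled by the $\alpha'$-bounds together with an overloading estimate of the kind used in Step II, this contributes $O((\alpha'/\epsilon)d)$. With multiplicities coming from the six non-$e$ edges of the two $4$-cycles, the two relevant dimensional matchings and the two vertices $u,v$, the total count is bounded by the left-hand side of the inequality in the lemma's hypothesis; since this is positive, there is always a good $c$.

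The principal difficulty I anticipate is the bookkeeping between the two swaps in the detour case: verifying that the first auxiliary swap genuinely repairs the local structure so that the subsequent $\{a,b\}$-swap through $e$ operates on a $4$-cycle and is itself allowed, and checking that this second swap does not disturb any structure secured for previously processed prescribed edges. These conditions are absorbed into the enumeration above. Once the single-edge construction is verified, processing the prescribed edges in a fixed order while maintaining a running register of edges used in previously built sets $T_{e'}$ yields the required disjoint family, and performing all the swaps simultaneously on $h''$ produces the proper $d$-edge coloring extending $\varphi'$ and avoiding $L$.
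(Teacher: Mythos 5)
There is a genuine gap, and it is structural rather than a matter of bookkeeping. Your scheme allots each prescribed edge $e=uv$ (with $\varphi'(e)=a$, $h''(e)=b$) either a single $\{a,b\}$-swap through $e$ or a ``detour'' consisting of one auxiliary cycle plus the main one, so $|T_e|\le 7$. This is too small to handle the generic situation. First, for the final swap through $e$ to leave a proper coloring, the two cycle edges at $u$ and at $v$ must be \emph{exactly} the $a$-colored edges at those vertices (otherwise $a$ survives elsewhere at $u$ or $v$ and the coloring is no longer proper); these two requested edges exist at both endpoints in general, and after the $S$-swaps of Step III they need not be parallel, i.e.\ need not lie on a common $4$-cycle with $uv$ at all. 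You therefore need a separate clearing swap at \emph{each} endpoint to move the color $a$ onto a pair of parallel edges, which is already three cycles. Second, each clearing cycle (and the final cycle) must itself be $2$-colored, and the parallel edge it needs may also have been displaced by an $S$-swap, so each of the three cycles may require its own preparatory swap on a further $4$-cycle. This is exactly why the paper's configuration $T_e$ spans $14$ vertices and a cascade of up to six swaps (three preparation cycles through $v_9,\dots,v_{14}$, two clearing cycles $v_1v_2v_6v_5v_1$ and $v_3v_4v_8v_7v_3$ that move $c_2$ off the requested edges $v_1v_2$ and $v_3v_4$, and only then the swap on $v_2v_3v_7v_6v_2$). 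Your detour case, as described, can repair at most one obstruction and cannot simultaneously clear both endpoints and restore the $2$-colored structure of the cycles used.

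A secondary but real issue is that your count is given as a sum of $O(\cdot)$ terms declared to match the lemma's hypothesis; since the hypothesis is an explicit linear inequality with coefficients $64,64,32,32,10,3,266/\epsilon$, the proof has to exhibit the per-condition eliminations that produce exactly these coefficients (the paper's conditions (1)--(9)), and these coefficients are only attainable once the correct, larger configuration is fixed. Relatedly, you would also need to verify explicitly that none of the intermediate swaps creates a new conflict with $L$ or disturbs a previously built $T_{e'}$ -- in the paper this is conditions (5), (6) and (7), and it requires choices of the preparation vertices, not just of the main cycle. I recommend redesigning $T_e$ around the three-stage cascade before redoing the count.
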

	
	\begin{proof}
	If $h''(e)=\varphi'(e)$ for all precolored edges $e$ then we do nothing;
	$h''$ is the required proper edge coloring. %that satisfies Lemma  \ref{complete}.
	\begin{figure}
	\begin{center}
	\hspace{0.045cm}\includegraphics[scale=0.2]{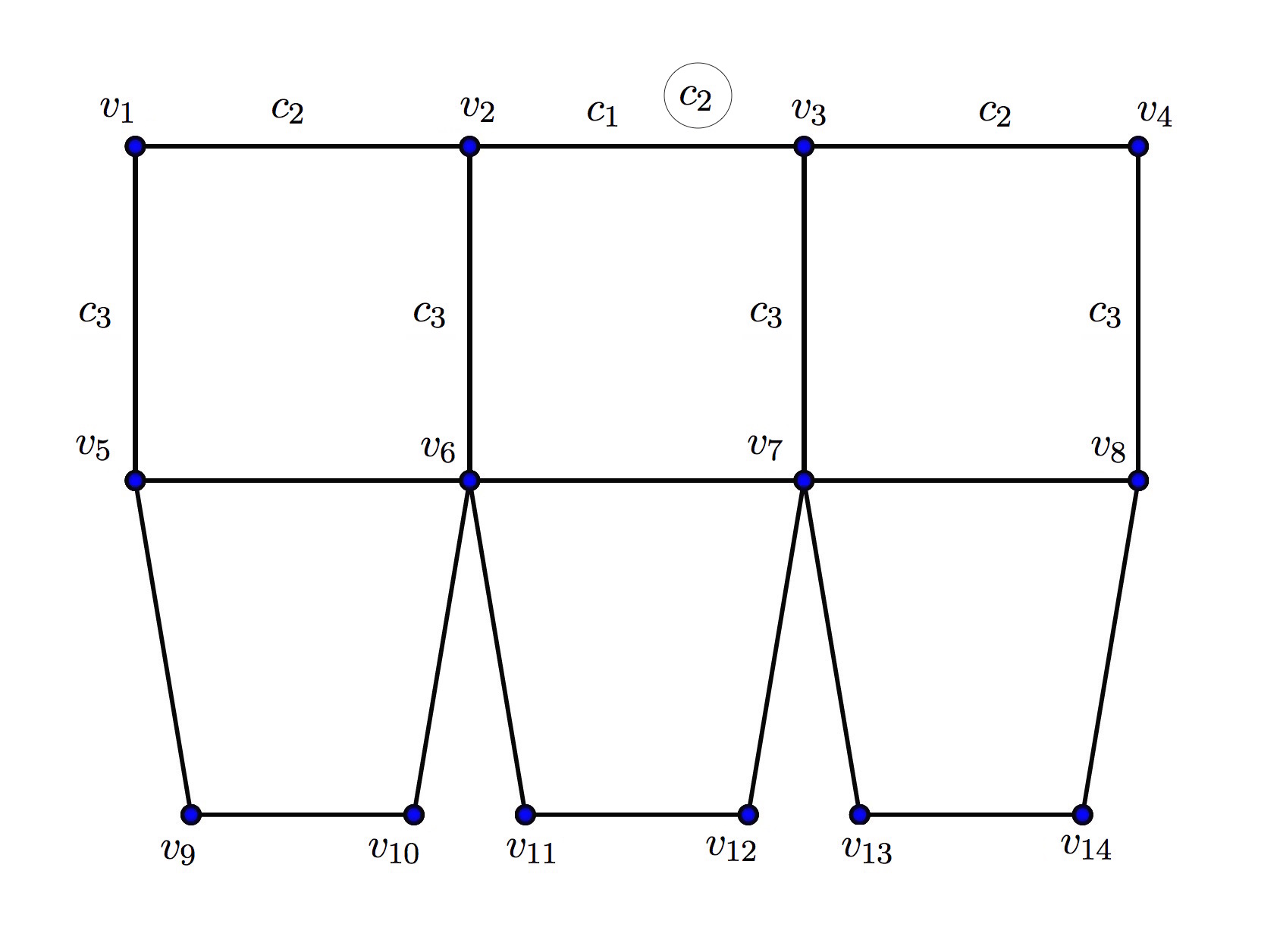}\hspace{15px}
	\caption{An example of a configuration $T_e$.}
	\label{fig:sw}
	\end{center}
	\end{figure}
	Else, we construct a set $T \subseteq E(Q_d)$,
	such that performing a sequence of swaps on allowed $2$-colored
	$4$-cycles of the subgraph of $Q_d$
	induced by $T$, we obtain the required extension of $\varphi'$.
	We refer to this construction as {\em $T$-swap}.
	For each $\varphi'$-precolored edge $e$, the set $T$ will contain
	a subset $T_e$ of edges associated with $e$; if $e$ and $e'$
	are distinct $\varphi'$-precolored edges of $Q_d$, then
	we will have $T_e \cap T_{e'} = \emptyset$.
	%which is a set of swaps of the coloring $h''$ so that in the resulting a
	%new proper $d$-edge coloring completes $\varphi'$. 
	An example of a subset $T_e$ can be seen in Figure 		
	\ref{fig:sw}, where $v_2v_3$ is a prescribed edge,
	and $v_1v_2$ and $v_3v_4$ are requested.
	Since distinct sets $T_e$ and $T_{e'}$ are disjoint,
	every requested edge is in at most one set $T_e$;
	this property is ensured by 
	Lemma \ref{request} (b).
	
	An edge that belongs to $T$ is called
	{\em used} in $T$-swap.
	By Lemma \ref{gamma}, for every $12$-neighborhood $W$ in $Q_d$,
	and every dimensional matching $M$,
	at most $\alpha' d$ edges of $M \cap E(W)$ are precolored under $\varphi'$. 
	For each configuration $T_e$ in $T$, every edge of $T_e$
	is at distance at most $2$ from the prescribed edge; this implies 
	that for every $10$-neighborhood $W$ in $Q_d$, the total number
	of edges in $W$ that are used in $T$-swap is at most $19\alpha' d^2$.
	
	A vertex $u$ in $Q_d$ is {\em $T$-overloaded} 
	if at least $\epsilon d$ edges from $E_u$ are used in $T$-swap; 
	note that no more than 
	$\dfrac{19\alpha' d^2}{\epsilon d}=\dfrac{19\alpha'}{\epsilon}d$ vertices
	of each $9$-neighborhood are $T$-overloaded.
	A dimensional matching $M$ in $Q_d$ is {\em $T$-overloaded}
	in a $t$-neighborhood $W$ if $M \cap E(W)$ contains 
	at least $\epsilon d$ edges that are used in $T$-swap;
	note that 
	for each $10$-neighborhood $W$
	no more than
	$\dfrac{19\alpha'}{\epsilon}d$ dimensional matchings
	 are $T$-overloaded in $W$.
	
	Consider the setup in Figure 1. We now describe how to
	construct the set $T_e$ for the prescribed edge $e=v_2v_3$.
	%We start from the edge $e=v_2v_3$ satisfying that
	Suppose that
	${\varphi'}(v_2v_3) = c_2 \neq h''(v_2v_3) =c_1.$ 
	Since every vertex in $Q_d$ has degree $d$ we initially have
	at least $d-3$ choices for a subgraph as in Figure 1.

	%Since $(\omega-17)/4 \geq 2$, 
	%in $(\omega-17)/4$-neighborhood $W$ of $e$ in $Q_d$,
	%we initially have total $d-3$ choices for a swap $T$. 

	Let $v_1v_2$ and $v_3v_4$ be the 
	edges adjacent to $v_2v_3$ that are colored $c_2$.
	The set $T_{v_2v_3}$ will consist 
	of edges incident
	with $14$ vertices $v_1, \dots, v_{14}$.
	%and then add it to $T$.
	We shall choose the vertices $v_5, \dots, v_{14}$ such that they satisfy
	a number of properties:
	
	\begin{itemize} 
	\item[(1)] $v_5,\dots, v_{14}$ and the dimensional matchings
	that contain $v_1v_5$, $v_2v_6$, $v_3v_7$, $v_4v_8$
	$v_5v_9$, $v_6v_{10}$, $v_6v_{11}$,
	$v_7v_{12}$, $v_7v_{13}$, $v_8v_{14}$
	are not $T$-overloaded in the $9$-neighborhood $W_e$
	of $e$. 
	
	These edges are in at most four dimensional 
	matchings and we select 10 new vertices, so this eliminates at most 
	$\dfrac{19\times 14\alpha'}{\epsilon}d$ or
	$\dfrac{266\alpha'}{\epsilon}d$ choices.

	Moreover, every 
	$2$-neighborhood $W$ in $Q_d$ that one of these edges
	lie in is contained in $W_e$, so with this strategy and
	with the bounds on the number of requested and
	prescribed edges under $h''$, in the process of choosing the set $T$,
	for every dimensional matching $M$ and every
	$2$-neighborhood $W$ in $Q_d$, $E(W) \cap M$
	contains at most $$3\mu d+3\alpha' d+ (\epsilon d -1) + 4=
	3\mu d+3\alpha' d+ \epsilon d +3$$
	edges that are used in $T$-swap. Similarly, every vertex
	is incident with at most 
	$$3\mu d+3\alpha' d+ (\epsilon d -1) + 5=3\mu d+3\alpha' d+ 
	\epsilon d +4$$ edges that
	are used in $T$-swap; these upper bounds follow from the facts that
	the maximum number of edges of $T_e$ incident with one vertex is $5$, 
	and the maximum
	number of edges from a given dimensional matching in $T_e$ is $4$.
	
	\item[(2)] None of the edges $v_1v_5$, $v_2v_6$,
	$v_3v_7$, $v_4v_8$ are prescribed or requested or used before
	in $T$-swap. 
	
	Since all the possible choices for these edges
	are in the $2$-neighborhood $W_e$ of $e$ in $Q_d$, and under $h''$ no vertex
	contains more than $\alpha' d$ prescribed edges and $\mu d$ requested edges,
	$T$-swap uses at most $3\mu d+3\alpha' d+ \epsilon d +4$
	edges at each vertex, this condition eliminates at most 
	$$4 \times (\mu d+\alpha' d)+4 \times (3\mu d+3\alpha' d+ \epsilon d + 4) =
	16\mu d + 16\alpha' d+ 4\epsilon d + 16$$ choices.
	
	\item[(3)] $v_1v_5$, $v_2v_6$, $v_3v_7$, $v_4v_8$ are not
	used before in $S$-swap. 
	
	Note that this condition ensures that
	$h''(v_1v_5) = h''(v_2v_6) = h''(v_3v_7) = h''(v_4v_8)=c_3$, where $c_3$ 
	is the most common color of the dimensional matching that contains $v_1v_5$. 
	This eliminates at most $8\kappa d + 4\epsilon d + 4$ choices based on 
	the conditions (c) and (d) of Lemma \ref{request}.
		
	\item[(4)] The three cycles $v_1v_2v_6v_5v_1$, $v_2v_3v_7v_6v_2$, 
	$v_3v_4v_8v_7v_3$ are allowed 
	before $S$-swap. 
	
	Each of the edges $v_1v_2$, $v_2v_3$, $v_3v_4$ belongs to at 
	most $\tau d$ non-allowed cycles, so this eliminates at most $3 \tau d$ choices.
	
	\item[(5)] $c_2 \notin \{L(v_1v_5) \cup L(v_4v_8)\}$. 
	
	Color $c_2$ appears at most
	$\beta d$ times in lists of the set of edges incident to $v_1$ (or $v_4$), so
	this eliminates at most $2\beta d$ choices.
	
	\item[(6)] $c_1 \notin \{L(v_2v_6) \cup L(v_3v_7)\}$. 
	
	Similarly, this eliminates
	at most $2\beta d$ choices.
	
	\item[(7)] If $v_1 v_2$ is in a dimensional matching of color 
	$c_2$ of $h''$, then we choose 
	$v_5, v_6$ such that $v_5v_6$ is not used in $S$-swap
	or $T$-swap, and $v_5v_6$ is not prescribed or requested.
	Note that the conditions imply that $h''(v_5v_6) =c_2$; in this case we 
	choose $v_9, v_{10}$ arbitrarily.
	Based on the restriction of edges used in each dimensional matching
	and $v_5v_6 \neq v_1v_2$, these restrictions eliminate at most
	$$(2\kappa d+\epsilon d +1)+(3\mu d+3\alpha' d+ \epsilon d + 4)+1
	=2\kappa d+ 3\alpha' d + 2\epsilon d + 3\mu d +6$$ choices.

	Else, we choose $v_5, v_6, v_9, v_{10}$ such that
	\begin{itemize}
	
	\item[(a)] $v_5v_6$ is not used in $S$-swap
	(and also $v_5v_6 \neq v_1v_2$);
	this eliminates at most $2\kappa d+\epsilon d +1$ choices. We can assume
	that $v_5v_6$ is in a dimensional matching of color $c_4$.
		
	\item[(b)] $v_5v_6$ is in an allowed cycle
	$v_5v_6v_{10}v_9v_5$ with color $c_2$ (which means
	$h''(v_5v_9)=h''(v_6v_{10})=c_2$, 
	$h''(v_9v_{10})=h''(v_5v_6)=c_4$), $c_4 \notin \{L(v_5v_9) \cup L(v_6v_{10})\}$
	and $c_2 \notin \{L(v_5v_6) \cup L(v_9v_{10})\}$. 
	
	In the $3$-neighborhood $W$ of $e$ in $Q_d$,
	$S$-swap uses at most $2\kappa d+\epsilon d +1$ edges in the dimensional
	matching of color $c_2$, so this eliminates at most 
	$4\kappa d + 2\epsilon d + 2$ choices for $v_5v_9$
	and $v_6v_{10}$. We also require that $v_9v_{10} \neq v_5v_6$
	and that $v_9v_{10}$ is not used in $S$-swap to make sure
	$h''(v_9v_{10})=h''(v_5v_6)=c_4$; this eliminates at most 
	$2\kappa d + \epsilon d + 2$ choices.
	
	Since $c_2$ occurs $\beta d$ times in the 
	subset of the dimensional matching of color $c_4$ contained
	in the $27$-neighborhood $W$ of $e$ in $Q_d$,
	and $c_4$ occurs $\beta d$ times in the subset of the
	dimensional matching of color $c_2$ contained in $W$, 
	the two conditions $c_4 \notin \{L(v_5v_9) \cup L(v_6v_{10})\}$
	and $c_2 \notin \{L(v_5v_6) \cup L(v_9v_{10})\}$ 
	eliminate at most $2\beta d$ choices.
	
	\item[(c)] $v_5v_6$, $v_5v_9$, $v_9v_{10}$,
	$v_6v_{10}$ are not prescribed or requested or used before
	in $T$-swap. 
	
	Since all the possible choices for these edges
	are in the $2$-neighborhood $W$ of $e$ in $Q_d$, 
	this eliminates
	at most $4 \times (\mu d + \alpha' d) + 
	4 \times (3\mu d+3\alpha' d+ \epsilon d + 4)$ 
	choices.
	
	\end{itemize}
	So in both cases, the choosing process eliminates at most
	$$16\mu d + 16\alpha' d +8\kappa d+ 8\epsilon d + 2\beta d + 21$$ choices.
	
	\item[(8)] $v_7, v_8, v_{14}, v_{13}$ is chosen with same strategy as 
	$v_5, v_6, v_9, v_{10}$. 
	
	Similarly,
	this eliminates at most $16\mu d + 16\alpha' d +8\kappa d+ 
	8\epsilon d + 2\beta d + 21$ choices.
	 
	 \item[(9)] $v_6, v_7, v_{11}, v_{12}$ is chosen with same strategy with 
	$v_5, v_6, v_9, v_{10}$ but the color $c_2$ is replaced by $c_1$. 
	
	Again, 
	this eliminates at most 
	$16\mu d + 16\alpha' d +8\kappa d+ 8\epsilon d + 2\beta d + 21$ choices.
	\end{itemize}
	
	Summing up, we conclude that in total, there are at most  
	$$64\mu d + 64\alpha' d+ 32\kappa d+ 32\epsilon d + 10\beta d +3\tau d
	+\dfrac{266\alpha'}{\epsilon}d+83$$ forbidden 
	choices for the configuration $T_e$.
	
	This implies that we have
	$$d-3- 64\mu d - 64\alpha' d-  32\kappa d- 32\epsilon d - 10\beta d -3\tau d- 
	\dfrac{266\alpha'}{\epsilon}d - 83$$
	or 
	$$Z=d - 64\mu d - 64\alpha' d-  32\kappa d-  32\epsilon d - 10\beta d 
	-3\tau d - \dfrac{266\alpha'}{\epsilon}d- 86$$
	choices for a configuration $T_{e'}$ in the process of constructing $T$,
	whenever $e'$ is a prescribed edge.
	
	By assumption,  $Z>0$, so there is a set $T_{v_2v_3}$ that
	satisfies all the above conditions. 
	We add this set to
	$T$ and apply this procedure for all prescribed
	edgess $uv$ with $h''(uv) \neq \varphi'(uv)$. 
	Since the resulting subsets of $T$ are disjoint, we 
	can do the following transformation for each subset $T_{v_2v_3}$
	as above.
	\begin{itemize}
	\item If $h''(v_5v_6) \neq c_2$, then interchange colors of the 
	cycle $v_5v_6v_{10}v_9v_5$.
	
	\item If $h''(v_6v_7) \neq c_1$,  then interchange colors 
	of the cycle $v_6v_7v_{12}v_{11}v_6$.
	
	\item If $h''(v_7v_8) \neq c_2$, then interchange colors of 
	the cycle $v_7v_8v_{14}v_{13}v_7$.
	
	\item Next, interchange colors of the cycles $v_1v_2v_6v_5v_1$ 
	and $v_3v_4v_8v_7v_3$.
	
	\item Finally, interchange colors of the cycle $v_2v_3v_7v_6v_2$. 
	
	\end{itemize}
	In the resulting edge coloring obtained from $h''$, 
	$v_2v_3$ is colored $c_2$.
	Moreover, it follows from conditions $(3)$, $(4)$, $(5)$, $(7)$ that
	we do not create any new conflict edges by performing these swaps.
	We thus conclude that
	by repeating this swapping procedure for every prescribed edge,
	we obtain a new proper $d$-edge coloring 
	which agrees with the precoloring $\varphi'$.
	%
	%
	%(Note that the purpose of exchange colors after choosing
	%all swaps in $T$ is to make sure
	%the bounded number of requested edges does not change
	%during the choosing process.)
	\end{proof}

We have proved that it is possible to complete 
all the steps I-IV outlined in Section 2,
thereby obtaining an extension of $\varphi$ that avoids $L$; 
this completes the proof of Theorem \ref{th:main2}.
\end{proof}

Let us now turn to the proof of Theorem \ref{secondth}.
As we shall see, Property \ref{prop:stand}
of the standard edge coloring $h$ of $Q_d$ trivially yields
the result.

\begin{proof}[Proof of Theorem \ref{secondth} (sketch).] 
	Let $\varphi$
	be a $d$-edge precoloring of $Q_d$, 
	and $L$ a $\beta$-sparse list assignment for
	the non-precolored edges of $Q_d$, such that any
	edge $e$ which is either
	precolored or satisfies $L(e) \neq \emptyset$ belongs
	to a distance-3 matching $M$ 
	in $Q_d$. Let $h$ be the standard $d$-edge coloring of $Q_d$ defined above.
	
	%Clearly, the precoloring $\varphi$, $h$ and $L$ trivially
	%satisfy conditions (c)-(e) of Lemma \ref{alpha}.
	Now, by arbitrarily
	picking a color from the set $\{1,\dots, d\} \setminus L(e')$ 
	for each conflict edge $e'$,
	we can construct %use Lemma \ref{gamma} for obtaining
	a precoloring $\varphi'$ from $\varphi$ such that an edge $e$ of $Q_d$
	is precolored under $\varphi'$ if and only if $e$ is precolored
	under $\varphi$ or $e$ is a conflict of $h$ with $L$.
	Furthermore, any $2$-colored $4$-cycle $C$
	with colors $c_1$ and $c_2$ under $h$, and satisfying that there is an edge
	$e \in E(C)$ with $\varphi'(e) =c_1$ and $h(e)= c_2$
	is allowed.
	Moreover, since edges that are precolored under $\varphi'$ are at distance
	at least $3$ from each other, two $4$-cycles containing distinct precolored
	edges are disjoint. Now, by Property \ref{prop:stand}, every
	edge in $Q_d$ is contained in $d-1$ $2$-colored $4$-cycles under $h$;
	thus, we may complete the proof by simply swapping on
	a suitable set of disjoint $2$-colored $4$-cycles.	
\end{proof}

%------------------------------------------------------------------
%\subsection{Proof of Theorem \ref{mainth}}

%------------------------------------------------------------------
%\subsection{Proof of Theorem \ref{secondth}}

%------------------------------------------------------------------------------------------------------------------------------------------------------------------------------------------------------------------------
\section{Upper bounds and further problems}
We have proved that there are constants $\alpha$ and $\beta$ such  that every $\alpha$-dense $d$-edge precoloring of $Q_d$ 
can be extended to a proper $d$-edge coloring avoiding any given $\beta$-sparse list assignment for $Q_d$.  The values we have found for 
$\alpha$ and $\beta$ are quite small, to a large extent due to the
calculations in
Lemma \ref{alpha}.
%the probabilistic lemmas in the proof. 

Let us briefly compare our results obtained in this paper with corresponding
results for complete bipartite graphs. Recall that a list assignment $L$
for $K_{n,n}$ is {\em $\beta$-sparse} if
each edge $e$ of $K_{n,n}$ is assigned a list $L(e)$
of at most $\beta n$ forbidden colors from $\{1,\dots,n\}$, 
and at every vertex $v$ each color appears in lists of
at most $\beta n$ edges adjacent to $v$; similarly an $n$-edge precoloring
of $K_{n,n}$ is {\em $\alpha$-dense} if
every color is used at most $\alpha n$ times in the precoloring
and at every vertex $v$ at most $\alpha n$ edges incident to $v$
are precolored.
%If we, for the moment, ignore the conditions we have put 
%on the dimensional matchings our results can be 
%compared with those for $K_{d,d}$. 
For $K_{n,n}$ Daykin and H\"aggkvist \cite{DH} conjectured that $\alpha=1/4$ 
is the optimal value, and H\"aggkvist conjectured that $\beta=1/3$ is optimal.  The currently best value is $\alpha=1/25$, as proven in \cite{BKLOT}. The best known 
value for $\beta$ is given in \cite{ACO} is far smaller, 
due to probabilistic tools. 
That one can simultaneously take $\alpha$ and $\beta$ 
to be positive was proven 
in \cite{ACM}.

%If we continue to work without the constraints on dimensional matchings we can
%state some upper bounds for general graphs. When applied to  $Q_d$ the
%construction given
%below does in fact satisfy our condition for the dimensional matchings, so this %bound is an upper bound to the constants in our theorem as well.
For the hypercube $Q_d$, the following general proposition
yields an upper bound on the values of $\alpha$ and $\beta$
in Theorem \ref{mainth}.

\begin{proposition}
\label{prop:bound}
Let $G$ be a $d$-regular d-edge-colorable graph.

\begin{itemize}
\label{bound}
	\item[(i)]  If every $d$-edge precoloring of $G$, satisfying that
	each vertex of $G$ is incident to at most $\alpha d$ precolored edges,
	is extendable, then
	$\alpha < \frac{1}{2}$.
	
	\item[(ii)] If every list assignment $L$, such that $|L(e)| \leq \beta d$
for each edge $e \in E(G)$, and for each vertex $v$
each color appears in at most $\beta d$ lists of edges incident with $v$,
is avoidable, then
	$\beta < \frac{1}{2}$.
	
	\item[(iii)] If every precoloring as in (i)
	is extendable to a coloring avoiding any list assignment as in (ii), then
	$\alpha +\beta < \frac{1}{2}$.
\end{itemize}

\end{proposition}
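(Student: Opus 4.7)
My plan is to prove each of (i), (ii), (iii) by exhibiting an explicit counterexample at the density threshold $1/2$. The common tool is a Hall-type marriage obstruction localized at a single vertex $v$ of $G$; regularity of $G$ guarantees $|E_v|=d$ and ensures the combinatorics works out.

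For part (ii) I proceed as follows, since it is the cleanest case. Suppose, for the contrapositive, that $\beta\ge 1/2$; I will construct a $\beta$-sparse list assignment $L$ that is not avoidable. Fix any vertex $v$, choose $S\subseteq E_v$ of size $s$ and a color set $K\subseteq\{1,\dots,d\}$ of size $k$, and set $L(e)=K$ for $e\in S$ and $L(e)=\emptyset$ for every other edge of $G$. The $\beta$-sparsity requirements reduce to $k\le\beta d$ (list-size condition) and $s\le\beta d$ (each color of $K$ occurs in exactly $s$ lists at $v$). In the bipartite graph on $E_v\times\{1,\dots,d\}$ with $e\sim c$ iff $c\notin L(e)$, the neighborhood of $S$ has size exactly $d-k$; by Hall's theorem the list is avoidable at $v$ only if $d-k\ge s$, so non-avoidability follows as soon as $s+k\ge d+1$. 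Combining this with $s,k\le\beta d$ forces $2\beta d\ge d+1$ and hence $\beta>1/2$.

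For part (i) I run the analogous construction using precolored edges instead of list entries. Given $\alpha\ge 1/2$, precolor $s$ edges at $v$ with distinct colors from a set $T$ of size $s$, so the remaining $d-s$ edges at $v$ are forced to bijectively receive the colors of $\{1,\dots,d\}\setminus T$. At each of the corresponding neighbors $v_i$ I then precolor a few further edges (not incident to $v$) so that all of them share a common palette $C\subseteq\{1,\dots,d\}\setminus T$, which forbids $vv_i$ from using any color of $C$; the regularity and $d$-edge-colorability of $G$ allow me to arrange these neighbor-precolorings compatibly. The resulting Hall obstruction on the bijection at $v$ appears exactly when the arithmetic of (ii) is reproduced, giving $\alpha\ge 1/2$. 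For part (iii) I combine (i) and (ii) at the same vertex $v$: precolor $\alpha d$ edges at $v$ with distinct colors from a set $T_1$, and place the common list $T_1$ on $\beta d$ further edges at $v$ disjoint from the precolored ones, so that the condition $\varphi(e)\notin L(e)$ holds automatically. At $v$ the $\beta d$ list-restricted edges must be colored from $\{1,\dots,d\}\setminus T_1$, and supplementing with the neighbor-precoloring trick of (i) produces a Hall failure at $v$ when $\alpha+\beta\ge 1/2$.

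The main obstacle is obtaining the \emph{strict} inequality. The single-vertex Hall bound naturally gives only $\beta\le 1/2+O(1/d)$, because $s$ and $k$ are integers and the relevant Hall condition is the discrete $s+k\ge d+1$. To close this $O(1/d)$ slack one either supplements the construction with one extra precolored edge at a neighbor of $v$ (promoting the obstruction to a two-vertex argument that handles the boundary case $s+k=d$), or exploits the integrality of $\lceil\alpha d\rceil$ and $\lceil\beta d\rceil$ directly; the same caveat applies uniformly to (i), (ii), and (iii). Once this is dealt with, the remaining work is routine verification of the sparsity/density conditions and the properness of the constructed precoloring.
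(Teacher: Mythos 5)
Your construction is centered on a single vertex $v$, whereas the paper's proof for all three parts is centered on a single edge $u_1u_2$, blocking colors at both of its endpoints simultaneously. This is not a cosmetic difference: the two-endpoint structure is exactly what closes the gap you correctly identify at the end of your write-up.

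For part (ii): your Hall obstruction at $v$ fails precisely when $s+k\geq d+1$, so with $s,k\leq\beta d$ you need $2\beta d\geq d+1$, i.e.\ $\beta\geq\tfrac{1}{2}+\tfrac{1}{2d}$. At $\beta=\tfrac{1}{2}$ (even $d$) your construction simply does not produce a bad list assignment, so the contrapositive gives only $\beta<\tfrac{1}{2}+\tfrac{1}{2d}$, not the claimed strict $\beta<\tfrac{1}{2}$. The paper avoids this by putting $\lceil d/2\rceil$ restricted edges at each of the two endpoints of $u_1u_2$ with complementary forbidden palettes; then at $\beta=\tfrac{1}{2}$ the edge $u_1u_2$ is forced to take a color from $\{1,\dots,\lceil d/2\rceil\}$ by the $u_1$ side and simultaneously from $\{\lceil d/2\rceil+1,\dots,d\}$ by the $u_2$ side, a genuine contradiction. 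You gesture at ``one extra precolored edge at a neighbor of $v$'' as the fix, but for (ii) precolored edges are not available, and even phrased correctly this is not a small supplement --- it is the whole two-endpoint construction, which you would need to carry out.

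For parts (i) and (iii): your plan — precolor at $v$, then go to each neighbor $v_i$ and precolor further edges so that ``all of them share a common palette $C$'' — has two problems. First, it is overkill in a way that makes the argument non-informative: if you really repeat a single color $c\notin T$ at all $d-s$ neighbors, then even one precolored edge at $v$ already gives a non-extendable example, so you obtain $\alpha$ of order $1/d$ rather than $1/2$; what is actually going on is that you are repeating a color $d-s$ times, which is exactly the kind of pathology the paper's full $\alpha$-density condition (color used at most $\alpha d$ times in a neighborhood) is designed to exclude. The paper's constructions for (i) and (iii) use each color at most once (for (i)) or a bounded number of times, and thus genuinely probe the $1/2$ threshold; yours does not. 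Second, ``the regularity and $d$-edge-colorability of $G$ allow me to arrange these neighbor-precolorings compatibly'' is unsubstantiated — that part is not routine and is where all the properness bookkeeping lives. The paper's approach entirely sidesteps both issues: color $\lceil d/2\rceil$ edges at $u_1$ and $\lceil d/2\rceil$ at $u_2$ with all $d$ colors among them, so $u_1u_2$ itself has no available color; no neighbor-of-neighbor coordination is needed and each color appears exactly once. For (iii) the same edge-centered scheme is used with $\alpha d$ precolored edges and $\beta d$ list-restricted edges at each endpoint.

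In summary: the general Hall/marriage idea is sound, but the one-vertex implementation under-shoots in (ii) and over-shoots/under-specifies in (i) and (iii). The clean fix, which is the paper's approach, is to anchor the obstruction at a single edge and load both of its endpoints, using each color only a bounded number of times.
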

\begin{proof}
	\begin{itemize}
		\item[(i)]
		Let $u_1u_2$ be an edge of $G$. We define an edge precoloring
		$\varphi$ of $G$ by coloring $\lceil d/2 \rceil$
		edges incident with $u_1$ and distinct from $u_1u_2$ by colors
		$1,\dots, \lceil d/2 \rceil$; next, color $\lceil d/2 \rceil$
		edges incident
		to $u_2$ and distinct from $u_1u_2$ by colors 
		$\lceil d/2 \rceil+1, \dots, d$.
		This yields an edge $d$-precoloring which is not extendable 
		to a proper $d$-edge coloring, %Moreover, this precoloring 
		%is $1/2$-dense, 
		so necessarily $\alpha<1/2$.

		\item[(ii)]
		Let $u_1u_2$ be an edge of $G$.   Next, to $\lceil d/2 \rceil$ 
		edges incident with $u_1$, but not $u_2$,  assign identical 
		color lists containing all the 
		colors $1,\dots, \lceil d/2 \rceil$. Similarly assign 
		to $\lceil d/2 \rceil$ edges incident with $u_2$, but not $u_1$, 
		identical color lists containing 
		all the colors $\lceil d/2 \rceil+1,\dots, d$. Now, 
		since apart from $u_1u_2$, there are at 
		most $\lceil d/2 \rceil-1$ edges incident with $u_1$, where
		colors $1, \dots, \lceil d/2 \rceil$ are not forbidden, 
		we must have that $u_1u_2$ is colored with a color from 
		$1, \dots, \lceil d/2 \rceil$ in any proper 
		$d$-edge coloring of $G$ avoiding the list assignment; 
		similarly by the restrictions at $u_2$, $u_1u_2$ must be 
		colored with a color from 
		$\lceil d/2 \rceil+1, \dots, d$ in any 
		coloring of $G$ avoiding the list assignments at $u_2$. 
		This is clearly not possible, so the list assignment 
		is unavoidable, and thus $\beta < 1/2$.

		\item[(iii)]
		The precoloring and list-assignments defined 
		above can be combined in the following way 
		(we assume that $1/2 > \beta > \alpha$ and that $\alpha d$ and 
		$\beta d$ are integers):

		Let $u_1u_2$ be an edge as above, 
		let $H_1$ be the star induced by $u_1$ and its
		neighbors except for $u_2$, and $H_2$ the corresponding star for $u_2$.	
		
		We now consider the assignment where in $H_1$ 
		there are $\alpha d$ precolored edges incident with $u_1$ 
		using colors $d - \alpha d +1, \dots, d$; 
		moreover, exactly
		$\beta d$ edges in $H_1$ incident with $u_1$, 	
		distinct from the precolored ones, are assigned 
		identical lists with colors $1, \dots, \beta d$.
		Similarly, in $H_2$ there are $\alpha d$  
		precolored edges incident with $u_2$ using 
		colors $1, \dots, \alpha d$; moreover, 
		there are precisely $\beta d$ edges in 
		$H_2$ incident with $u_2$, distinct from the precolored edges, 
		all of which are assigned identical color 
		lists  containing colors $d- \beta d+1, \dots, d$.

		Now, for any proper $d$-edge coloring $f$ of $G$ which
		is an  extension of the precoloring and which avoids the 
		the list assignment, the colors $1, \dots, \beta d$ 
		must appear on edges incident with $u_1$ which are neither 
		precolored nor are assigned a non-empty list of forbidden colors. 
		By a similar argument for $H_2$, we must have that for any coloring $f$
		which is an extension of the precoloring and also avoids
		the list assignment, colors 
		$d - \beta d +1, \dots, d$	 must appear on edges incident with $u_2$ 
		which are neither precolored nor are assigned a non-empty list of 
		 forbidden colors. 
		Note 
		that both $u_1$ and $u_2$ are incident 
		with exactly $d- \beta d - \alpha d$ edges 
		which are neither precolored nor contain a non-empty 
		list of forbidden colors.
		Thus if $d- \alpha d - \beta d \leq \beta d$, 
		then the edge $u_1u_2$ must receive a color both 
		from the set $\{1, \dots, \beta d\}$ and from the set 
		$\{d - \beta d +1, \dots, d\}$ under $f$.  
		Moreover, if $\beta d \leq d - \beta d +1$, 
		then these sets are disjoint, implying that there 
		is no extension of the precoloring 
		which avoids the given list assignment. 
		Here, by choosing $\beta$ close to $1/2$ and $\alpha$ small, 
		we can make the sum $\alpha + \beta$ arbitrarily close to $1/2$. 

	\end{itemize}
\end{proof}

Returning to the setup of Theorem \ref{mainth}, 
we have attempted to find constructions which yield 
better upper bounds for $\alpha$ and $\beta$ for the hypercubes,
but have not been able to do so. 
Moreover, the conditions (ii) and (iii) for a precoloring of $Q_d$
to be $\alpha$-dense are not probably not best possible in terms
of size of the neighborhoods.
%We have not be able to do 
%so for increasing $d$ and in each case the problem 
%has been to get around the conditions on the dimensional matchings.  
Those conditions are required in our proof, 
but might be far stronger 
than what is actually needed in order 
for a compatible edge coloring to exist.  
Nonetheless it would be interesting to see how 
far Theorem \ref{mainth} can be improved in its current form
(possibly with decreased size of the neighborhoods).
\begin{problem}
	What are the optimal values for $\alpha$ and $\beta$ in Theorem \ref{mainth}? 
\end{problem}

Our focus here has been the family of hypercubes but of course the type of problem we have considered is interesting for more general graphs as well.  
The examples in \cite{EGHKPS} show that in order to get results similar 
to those for $K_{n,n}$,  and those given in this paper, one must impose some structural conditions on the considered family of graphs.  
Both $K_{n,n,}$ and $Q_d$ are well connected bipartite graphs and it would be interesting to see how far Proposition \ref{bound} can be improved for this general class of graphs.
\begin{problem}
	Given a precoloring and a list assignment as in Proposition \ref{prop:bound},
	what are the optimal values for $\alpha$ and $\beta$ for the family of $d$-regular, $d$-edge connected, bipartite graphs? 
	
\end{problem}
Here the cases closest to our results are of course those where $d$ 
is a function of the number of vertices in the graph.

Finally, as mentioned in the introduction, our proof method easily give us Theorem \ref{secondth} where the edges 
which are precolored or have non-empty lists of forbidden colors
on them are forced to lie in a distance-3 matching.  
Here it is natural to ask if this result holds for distance-2 matchings 
 as well.

%------------------------------------------------------------------------
\bibliographystyle{amsalpha}
\bibliography{papers}

\end{document}